 %&amstex
\documentclass[11pt,a4paper]{amsart}
\usepackage{amsmath,amssymb,amsthm,amsfonts}
\usepackage{mathrsfs}
\usepackage{amscd}
\usepackage{listings}
\usepackage{paralist}
\usepackage{booktabs}
\usepackage[usenames,dvipsnames]{color}
\usepackage{comment}
\usepackage{graphicx}
\usepackage{latexsym}
\usepackage[shortlabels]{enumitem}
\usepackage{thmtools}
\usepackage{url}
\usepackage[all,cmtip]{xy}
\usepackage[bookmarks=false]{hyperref} %hyperref wants to be loaded last

\voffset=5mm
\oddsidemargin=18pt \evensidemargin=18pt
\headheight=9pt     \topmargin=0pt%-24pt
\textheight=620pt   \textwidth=443.pt

\newtheorem{theorem}{Theorem}[section]

\newtheorem{cor}[theorem]{Corollary}
\newenvironment{corollary}{\begin{cor} \em}{\end{cor}}
\newtheorem{tont}[theorem]{Definition}
\newenvironment{definition}{\begin{tont} \em}{\end{tont}}
\newtheorem{lemma}[theorem]{Lemma}

\newtheorem{proposition}[theorem]{Proposition}
 
\DeclareMathOperator\Ker{Ker}
\DeclareMathOperator\Ima{Im}
\begin{document}

\title [Associated primes and integral closure of rings]{Associated primes and integral closure of Noetherian rings}
\author{Antoni Rangachev}
\begin{abstract} 
Let $\mathcal{A}$ be a Noetherian ring and $\mathcal{B}$ be a finitely generated $\mathcal{A}$-algebra. Denote by $\overline{\mathcal{A}}$ the integral closure of $\mathcal{A}$ in $\mathcal{B}$. We give necessary and sufficient conditions for prime ideals to be in $\mathrm{Ass}_{\mathcal{A}}(\mathcal{B}/\overline{\mathcal{A}})$ and $\mathrm{Ass}_{\overline{\mathcal{A}}}(\mathcal{B}/\overline{\mathcal{A}})$ generalizing and strengthening classical results for rings of special type. 
\end{abstract}
\subjclass[2010]{13B22, 13A30, (13A18)}
\keywords{Integral closure of rings, associated primes, Rees algebras}
\address{Department of Mathematics\\
 University of Chicago \\
 Chicago, IL 60637\\
Institute of Mathematics \\
and Informatics, Bulgarian Academy of Sciences\\
Akad. G. Bonchev, Sofia 1113, Bulgaria\\}

\maketitle
%\selectlanguage{english}
\section{Introduction} 
Let $\mathcal{A} \subset \mathcal{B}$ be commutative rings with identity. Denote the integral closure of $\mathcal{A}$ in $\mathcal{B}$ by $\overline{\mathcal{A}}$. The main result of the paper is the following theorem. 

\begin{theorem}\label{main} Suppose $\mathcal{A}$ is Noetherian and $\mathcal{B}$ is a finitely generated $\mathcal{A}$-algebra. 
\vspace{.1cm}
\begin{enumerate}
\item[\rm{(i)}] Suppose that the minimal primes of $\mathcal{B}$ contract to primes of height at most one in $\mathcal{A}$. If $\mathfrak{q} \in \mathrm{Ass}_{\overline{\mathcal{A}}}(\mathcal{B}/\overline{\mathcal{A}})$, then $\mathrm{ht}(\mathfrak{q}) \leq 1$. If $\mathcal{B}$ is reduced and $\mathfrak{q}$ is not contained in a minimal prime of $\mathcal{B}$, then $\overline{\mathcal{A}}_{\mathfrak{q}}$ is a DVR. If $\mathfrak{p} \in \mathrm{Ass}_{\mathcal{A}}(\mathcal{B}/\overline{\mathcal{A}})$, then there exists a prime $\mathfrak{q}$ in $\overline{\mathcal{A}}$  with $\mathrm{ht}(\mathfrak{q}) \leq 1$ that contracts to $\mathfrak{p}$. 
\vspace{.2cm}
\item [\rm{(ii)}] Suppose that the minimal primes of $\mathcal{B}$ contract to minimal primes of $\mathcal{A}$ and  $\mathcal{A}_{\mathfrak{p}_{\mathrm{min}}}=\overline{\mathcal{A}}_{\mathfrak{p}_{\mathrm{min}}}$ for each minimal prime $\mathfrak{p}_{\mathrm{min}}$ of $\mathcal{A}$. Then there exists $f \in \mathcal{A}$ such that 
$$ \mathrm{Ass}_{\mathcal{A}}(\mathcal{B}/\overline{\mathcal{A}}) \subseteq \mathrm{Ass}_{\mathcal{A}}(\mathcal{B}_{f}/\mathcal{A}_f) \cup \mathrm{Ass}_{\mathcal{A}}(\mathcal{A}_{\mathrm{red}}/f\mathcal{A}_{\mathrm{red}}).$$ 
Furthermore, $\mathrm{Ass}_{\mathcal{A}}(\mathcal{B}/\overline{\mathcal{A}})$ and $\mathrm{Ass}_{\overline{\mathcal{A}}}(\mathcal{B}/\overline{\mathcal{A}})$ are finite.
\vspace{.1cm}
\item[\rm{(iii)}] Suppose $\mathcal{A}$ is locally equidimensional and universally catenary and suppose that the minimal primes of $\mathcal{B}$ contract to minimal primes of $\mathcal{A}$. If $\mathfrak{p} \in \mathrm{Ass}_{\mathcal{A}}(\mathcal{B}/\overline{\mathcal{A}})$, then $\mathrm{ht}(\mathfrak{p}) \leq 1$. 
\end{enumerate}
\end{theorem}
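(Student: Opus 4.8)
The plan is to reduce every assertion to a single height bound for a colon ideal, and then to establish that bound by passing to an extended Rees algebra, where the non‑integrality of an element of $\mathcal{B}$ over $\mathcal{A}$ is converted into a statement about associated primes of the integral closure of an ideal.

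\emph{Reductions.} Let $\mathfrak{q}$ be a prime of $\overline{\mathcal{A}}$ and $\mathfrak{p}=\mathfrak{q}\cap\mathcal{A}$. Since $\overline{\mathcal{A}}$ is integral over $\mathcal{A}$, incomparability forces any strict chain of primes of $\overline{\mathcal{A}}$ below $\mathfrak{q}$ to contract to a strict chain of primes of $\mathcal{A}$ below $\mathfrak{p}$, so $\mathrm{ht}(\mathfrak{q})\le\mathrm{ht}(\mathfrak{p})$; and if $\mathfrak{q}=\mathrm{ann}_{\overline{\mathcal{A}}}(\xi)$ for $\xi\in\mathcal{B}/\overline{\mathcal{A}}$ then $\mathfrak{p}=\mathrm{ann}_{\mathcal{A}}(\xi)$, so $\mathrm{Ass}_{\overline{\mathcal{A}}}(\mathcal{B}/\overline{\mathcal{A}})$ contracts into $\mathrm{Ass}_{\mathcal{A}}(\mathcal{B}/\overline{\mathcal{A}})$, while every prime of $\mathcal{A}$ is the contraction of a prime of $\overline{\mathcal{A}}$ by lying over. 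Finally, every associated prime of $\mathcal{B}/\overline{\mathcal{A}}$ is a colon ideal $(\overline{\mathcal{A}}:_{\mathcal{A}}b)$ for some $b\in\mathcal{B}$, and that ideal depends only on the finitely generated subalgebra $\mathcal{A}[b]$. Consequently all the height statements in (i) and (iii), together with the last sentence of (i), follow once one proves: \emph{if $b\in\mathcal{B}\setminus\overline{\mathcal{A}}$ and $\mathfrak{p}:=(\overline{\mathcal{A}}:_{\mathcal{A}}b)$ is prime, then $\mathrm{ht}(\mathfrak{p})\le 1$ under the hypotheses of (i), respectively (iii).}

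\emph{The height bound.} Localize at $\mathfrak{p}$, so $(\mathcal{A},\mathfrak{p})$ is local, $\mathfrak{p}b\subseteq\overline{\mathcal{A}}$, $b\notin\overline{\mathcal{A}}$, and one may assume $\mathfrak{p}\ne 0$. Form the extended Rees algebra $\mathcal{R}=\mathcal{A}[\mathfrak{p}t,t^{-1}]\subseteq\mathcal{B}[t,t^{-1}]$ and let $\overline{\mathcal{R}}$ be its integral closure in $\mathcal{B}[t,t^{-1}]$. A short degree computation shows $bt^{-1}\notin\overline{\mathcal{R}}$, since an integral dependence relation for $bt^{-1}$ over $\mathcal{R}$ would, on its lowest graded piece, yield one for $b$ over $\mathcal{A}$; and the hypothesis $\mathfrak{p}b\subseteq\overline{\mathcal{A}}$, together with the containments $\mathfrak{p}^{\,m}\overline{\mathcal{A}}\subseteq\overline{\mathfrak{p}^{\,m}\mathcal{B}}$, shows $\mathfrak{p}\,\mathcal{R}$ multiplies $bt^{-1}$ into $\overline{\mathcal{R}}$. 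Hence the radical of $(\overline{\mathcal{R}}:_{\mathcal{R}}bt^{-1})$ is a homogeneous prime of $\mathcal{R}$ contracting to $\mathfrak{p}$, associated to the $\mathcal{R}$-module $\mathcal{B}[t,t^{-1}]/\overline{\mathcal{R}}$; equivalently, the valuative criterion for integral closure in the possibly non-reduced ring $\mathcal{B}[t,t^{-1}]$ produces a minimal prime $\mathfrak{P}$ of $\mathcal{B}[t,t^{-1}]$ and a valuation $v$ of $\kappa(\mathfrak{P})$ dominating the image of $\mathcal{A}$ with $v(bt^{-1})<0$. The hypotheses now enter: in case (i), $\mathfrak{P}\cap\mathcal{A}$ is the contraction of a minimal prime of $\mathcal{B}$, hence of height at most one, and $v$ realizes the relevant divisor as a Rees valuation centered there, which bounds $\mathrm{ht}(\mathfrak{p})$ by $1$; in case (iii), $\mathfrak{P}\cap\mathcal{A}$ is a minimal prime of $\mathcal{A}$, and since $\mathcal{A}$ is locally equidimensional and universally catenary the dimension formula for the finitely generated domain $\mathcal{B}/\mathfrak{P}$ over $\mathcal{A}/(\mathfrak{P}\cap\mathcal{A})$ lets one transfer the height-one bound obtained for $v$ on the Krull normalization of $\mathcal{A}/(\mathfrak{P}\cap\mathcal{A})$ back to $\mathfrak{p}$. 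Carrying out this transfer in the stated generality — in particular controlling minimal primes when $\mathcal{B}$ is neither reduced nor equidimensional, and matching the valuation-theoretic inequality precisely to $\mathrm{ht}(\mathfrak{p})$ — is the step I expect to be the main obstacle.

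\emph{The DVR refinement and part (ii).} If $\mathcal{B}$ is reduced and $\mathfrak{q}\in\mathrm{Ass}_{\overline{\mathcal{A}}}(\mathcal{B}/\overline{\mathcal{A}})$ is not contained in a minimal prime of $\mathcal{B}$, localize at $\mathfrak{q}$: then $\overline{\mathcal{A}}_{\mathfrak{q}}$ is a one-dimensional reduced local ring, integrally closed in $\mathcal{B}_{\mathfrak{q}}$, which the condition on $\mathfrak{q}$ makes a normal domain with fraction field inside $\mathcal{B}_{\mathfrak{q}}$; Krull–Akizuki gives Noetherianity, and a one-dimensional normal Noetherian local domain is a DVR. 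For (ii), the hypotheses say that $\overline{\mathcal{A}}/\mathcal{A}$ is supported off the minimal primes of $\mathcal{A}$ and that minimal primes of $\mathcal{B}$ lie over minimal primes of $\mathcal{A}$; reducing to $\mathcal{B}=\mathcal{A}[b_{1},\dots,b_{n}]$ and clearing denominators in the nontrivial algebraic relations satisfied by those $b_{i}$ that are not transcendental over $\mathcal{A}$ produces $f\in\mathcal{A}$, in no minimal prime of $\mathcal{A}$, such that $\mathcal{B}_{f}$ is module-finite over a polynomial ring over $\mathcal{A}_{f}$ and $\overline{\mathcal{A}}_{f}$ absorbs every $b_{i}$ that has become integral. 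Thus an associated prime $\mathfrak{p}$ of $\mathcal{B}/\overline{\mathcal{A}}$ either avoids $f$, in which case it survives in $\mathcal{B}_{f}/\overline{\mathcal{A}}_{f}$ and lies in $\mathrm{Ass}_{\mathcal{A}}(\mathcal{B}_{f}/\mathcal{A}_{f})$, or contains $f$, in which case the height-one result of (i) (whose hypotheses follow from those of (ii)) and a local computation put $\mathfrak{p}$ in $\mathrm{Ass}_{\mathcal{A}}(\mathcal{A}_{\mathrm{red}}/f\mathcal{A}_{\mathrm{red}})$; this gives the asserted inclusion. Finiteness of $\mathrm{Ass}_{\mathcal{A}}(\mathcal{B}/\overline{\mathcal{A}})$ then follows since both sets on the right are finite — the second by Noetherianity of $\mathcal{A}_{\mathrm{red}}$, the first because after inverting $f$ the module is assembled from finitely many finitely generated pieces, or, in the Rees-algebra language above, by a Brodmann-type stabilization of associated primes along the integral-closure filtration — and finiteness of $\mathrm{Ass}_{\overline{\mathcal{A}}}(\mathcal{B}/\overline{\mathcal{A}})$ follows by the same circle of ideas, the height-one localization reducing it to finitely many divisorial primes over each member of the finite set $\mathrm{Ass}_{\mathcal{A}}(\mathcal{B}/\overline{\mathcal{A}})$.
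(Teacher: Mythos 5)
There is a genuine gap, and it sits exactly where you flag ``the main obstacle.'' Your reduction asserts that under the hypotheses of (i) one can prove: if $\mathfrak{p}=(\overline{\mathcal{A}}:_{\mathcal{A}}b)$ is prime then $\mathrm{ht}(\mathfrak{p})\le 1$. That is strictly stronger than what the theorem claims in (i) (which only asserts the existence of a prime $\mathfrak{q}\subset\overline{\mathcal{A}}$ of height at most one lying over $\mathfrak{p}$), and it is false in general: if it held, part (iii) would follow with no equidimensionality or universal catenarity at all, whereas those hypotheses are known to be necessary already in the classical Rees-algebra case ($\mathcal{A}=\mathcal{R}(I)$, $\mathcal{B}=R[t]$), where without quasi-unmixedness a prime of height $\ge 2$ can be associated to $R/\overline{I^n}$; the failure comes precisely from a height-one prime of the integral closure lying over a prime of larger height, i.e.\ from the failure of going down for $\mathcal{A}\subset\overline{\mathcal{A}}$. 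Concretely, in your extended-Rees/valuative step the hypothesis of (i) controls only $\mathrm{ht}(\mathfrak{P}\cap\mathcal{A})$ for the minimal prime $\mathfrak{P}$, not $\mathrm{ht}(\mathfrak{p})$; and ``$v$ is a divisorial (Rees) valuation centered at $\mathfrak{p}$'' does not bound $\mathrm{ht}(\mathfrak{p})$ by one (the order valuation of a two-dimensional regular local ring is divisorial and centered at the maximal ideal). Transferring a height bound from the center of $v$ on the normalization back to $\mathfrak{p}$ is exactly what the dimension formula buys in (iii) and what cannot be had in (i); what the correct argument must bound instead is the height of a prime of $\overline{\mathcal{A}}$ over $\mathfrak{p}$, and since the annihilator of $b$ in $\overline{\mathcal{A}}$ need not be prime or primary, producing such a prime requires extra work (the paper does this by completing, passing to reduced structures so that the relevant rings become module-finite over a complete reduced local ring and hence Noetherian, and then twisting $b$ by a suitable element $c$ extracted from a primary decomposition). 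Your sketch never confronts the non-Noetherianity of $\overline{\mathcal{A}}$ at all; in particular the Krull--Akizuki appeal in the DVR refinement does not apply as stated, since $\mathcal{A}_{\mathfrak{p}}$ modulo a minimal prime need not have dimension one and $\overline{\mathcal{A}}_{\mathfrak{q}}$ is not presented as an intermediate ring of a one-dimensional Noetherian domain inside a finite field extension.

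Part (ii) has related problems. Your $f$, obtained by clearing denominators in integral equations, is not shown to satisfy $\mathcal{A}_f=\overline{\mathcal{A}}_f$, which is what the case $f\notin\mathfrak{p}$ actually needs; the paper instead chooses $f$ inside every minimal prime of the finite set $\mathrm{Ass}_{\mathcal{A}}(\overline{\mathcal{A}}/\mathcal{A})$ and outside all minimal primes of $\mathcal{A}$, using the hypothesis $\mathcal{A}_{\mathfrak{p}_{\mathrm{min}}}=\overline{\mathcal{A}}_{\mathfrak{p}_{\mathrm{min}}}$, and this choice is where that hypothesis enters. More seriously, the case $f\in\mathfrak{p}$ is dismissed as ``a local computation,'' but getting from the existence of a height-one prime over $\mathfrak{p}$ in a module-finite extension $\mathcal{A}[x]$ to $\mathfrak{p}\in\mathrm{Ass}_{\mathcal{A}}(\mathcal{A}_{\mathrm{red}}/f\mathcal{A}_{\mathrm{red}})$ is the substantive conductor argument (in the style of the Mori--Nagata proof, Lem.\ 4.9.5 of Swanson--Huneke) that occupies a large part of the paper's proof and cannot be omitted. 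Finally, the finiteness of $\mathrm{Ass}_{\overline{\mathcal{A}}}(\mathcal{B}/\overline{\mathcal{A}})$ needs the fact that only finitely many primes of the possibly non-Noetherian ring $\overline{\mathcal{A}}$ lie over a given prime of $\mathcal{A}$, which the paper obtains from the completion being Noetherian and semilocal; ``finitely many divisorial primes'' is not a proof in this generality.
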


If $\mathcal{A}$ is reduced, and if $\mathcal{B}$ is contained in the ring of fractions of $\mathcal{A}$ and is integrally closed, then $\overline{\mathcal{A}}$ is the integral closure of $\mathcal{A}$ in its ring of fractions. In this case Thm.\ \ref{main} \rm{(i)} is part of the Mori--Nagata theorem (see Prp.\ 4.10.2 and Thm.\ 4.10.5 in \cite{Huneke}).

Suppose $R$ is a Noetherian ring and $I$ is an ideal in $R$, and $t$ is a variable. The {\it Rees algebra} of $I$, denoted $\mathcal{R}(I)$, is the subring of $R[t]$ defined as $\oplus_{n=0}^{\infty} I^{n}t^n$. Then Thm.\ \ref{main} \rm{(ii)} with $\mathcal{A}:= \mathcal{R}(I)$ and $\mathcal{B}:= R[t]$ along with Prp.\ \ref{finite} imply that the set $\mathrm{Ass}_{R}(R/\overline{I^n})$ is finite. This is a result of Rees \cite{Rees81}, which he derived as a consequence of his valuation theorem (cf.\ Sct.\ 8 in \cite{Sharp}), and of Ratliff \cite{Ratliff}, McAdam and Eakin \cite{Eakin}, who treated the case $\mathrm{ht}(I) \geq 1$.  In Cor.\ \ref{Rees finite} we show that Thm.\ \ref{main} \rm{(ii)} leads to an explicit description of $\mathrm{Ass}_{R}(R/\overline{I^n})$. An application of Thm.\ \ref{main} (\rm{ii}) to valuation theory will be considered in a sequal to this paper (\cite{Rangachev3}). 

Suppose $(R,\mathfrak{m})$ is a local Noetherian universally catenary ring. 
Theorem \ref{main} \rm{(iii)} was proved by various authors in the case when  $\mathcal{A}$ is the Rees algebra of an ideal or an $R$-module and $\mathcal{B}$ is a polynomial ring over $R$. 

Concretely, if $\mathcal{A}:= \mathcal{R}(I)$ is the Rees algebra of an ideal $I$ in  $R$ and $\mathcal{B}:=R[t]$ is the polynomial ring in one variable, then Thm.\ \ref{main} \rm{(iii)} is a classical result of McAdam \cite[Prp. 4.1]{McAdam}. His result was generalized by Katz and Rice \cite[Thm.\ 3.5.1]{Katz2} to the case when $\mathcal{A}$ is the Rees algebra of a finitely generated module $\mathcal{M}$ and $\mathcal{B}$ is the symmetric algebra of a free $R$-module $\mathcal{F}$ of finite rank that contains $\mathcal{M}$ with $\mathcal{M}$ and $\mathcal{F}$ generically equal. In  \cite[Thm.\ 5.4 ]{Rangachev} the author proved Thm.\ \ref{main}  \rm{(iii)} in the case when $\mathcal{A}$ and $\mathcal{B}$ are the Rees algebras of a pair of finitely generated $R$-modules $\mathcal{M} \subset \mathcal{N}$. In Cor.\ \ref{suv} we show that Thm.\ \ref{main}  \rm{(iii)} recovers at once a criterion for integral dependence of Simis, Ulrich and Vasconcelos ( \cite[Thm.\ 4.1]{SUV}). 
 
 %Let $(R,\mathfrak{m})$ be a Noetherian local ring and $\mathcal{A}$ be a Noetherian $R$-algebra. Define the {\it analytic spread} of $\mathcal{A}$ to be the Krull dimension of $\mathcal{A}/\mathfrak{m}\mathcal{A}$. It's a measure usually associated with Rees algebras which plays  important role in the theory of reductions of ideals and modules. 

%\begin{definition}
%Say that $\mathcal{A}$ has {\it deficient analytic spread} if $\mathrm{ht}(\mathfrak{m}\mathcal{A}) \geq 2$. 
%\end{definition}

%When $\mathcal{A}$ is equidimensional and $\mathrm{ht}(\mathfrak{m}\mathcal{A})>0$, then the maximal value of the analytic spread of $\mathcal{A}$ is $\dim \mathcal{A}-1$. We say that $\mathcal{A}$ has {\it non-maximal analytic spread} if $\dim \mathcal{A}/\mathfrak{m}\mathcal{A} \leq \dim \mathcal{A}-2$. 

%If $\mathcal{A}$ is equidimensional and has deficient analytic spread, then clearly $\mathcal{A}$ has non-maximal analytic spread. Additionally, if $R$ is universally catenary and $\mathcal{A}$ is a finitely generated $R$-algebra, then an easy application of the dimension formula shows that the two notions are equivalent (cf.\ Rmk.\ 8.6 in \cite{Rangachev}). 

We prove a converse to Thm.\ \ref{main} (\rm{iii}) under some additional hypothesis on $\mathcal{B}$ without requiring $\mathcal{A}$ to be universally catenary. 

\begin{theorem}\label{converse} Let $(R,\mathfrak{m})$ be a local Noetherian ring contained in $\mathcal{A}$.  Suppose  $\mathfrak{m}\mathcal{B}$ is of height at least $2$. If  $\mathfrak{p}$ is a minimal prime of $\mathfrak{m}\mathcal{A}$ with $\mathrm{ht}(\mathfrak{p}) \leq 1$, then $\mathfrak{p} \in \mathrm{Ass}_{\mathcal{A}}(\mathcal{B}/\overline{\mathcal{A}})$. 
\end{theorem}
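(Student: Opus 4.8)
The plan is to pass to the localization of everything at $\mathfrak{p}$ and there exhibit an explicit element of $\mathcal{B}/\overline{\mathcal{A}}$ whose annihilator is exactly $\mathfrak{p}$; the crux is that the two height hypotheses force the localized picture to collapse completely, turning a parameter of $\mathcal{A}$ into a unit of $\mathcal{B}$.

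First the routine reduction. Since $\mathcal{A}$ is Noetherian, it is enough to show $\mathfrak{p}\mathcal{A}_{\mathfrak{p}}\in\mathrm{Ass}_{\mathcal{A}_{\mathfrak{p}}}\big((\mathcal{B}/\overline{\mathcal{A}})_{\mathfrak{p}}\big)$, the subscript $\mathfrak p$ meaning localization at $S=\mathcal{A}\setminus\mathfrak{p}$; moreover $(\mathcal{B}/\overline{\mathcal{A}})_{\mathfrak{p}}=\mathcal{B}_{\mathfrak{p}}/\overline{\mathcal{A}_{\mathfrak{p}}}$ because integral closure commutes with localization. So I would replace $\mathcal{A},\mathcal{B},\overline{\mathcal{A}}$ by $\mathcal{A}_{\mathfrak{p}},\mathcal{B}_{\mathfrak{p}},\overline{\mathcal{A}_{\mathfrak{p}}}$: thus $\mathcal{A}$ is Noetherian local with maximal ideal $\mathfrak{p}$, $\sqrt{\mathfrak{m}\mathcal{A}}=\mathfrak{p}$ (as $\mathfrak p$ was minimal over $\mathfrak m\mathcal A$), $R\subseteq\mathcal{A}$, $\dim\mathcal{A}\le 1$, and I must produce a nonzero element of $\mathcal{B}/\overline{\mathcal{A}}$ killed by $\mathfrak{p}$. (The case $\mathrm{ht}(\mathfrak p)=0$ is vacuous under the hypotheses: it would make $\mathfrak m\mathcal A$, hence $\mathfrak m\mathcal B$, nilpotent after localizing, which is incompatible with $\mathrm{ht}(\mathfrak m\mathcal B)\ge 2$ unless $\mathcal B_\mathfrak p=0$; so $\dim\mathcal{A}=1$.)

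Now the key step. By prime avoidance — using $\mathrm{ht}(\mathfrak{p})=1$, so $\mathfrak{m}\mathcal{A}$ lies in no minimal prime of $\mathcal{A}$ — choose $x\in\mathfrak{m}$ outside every minimal prime of $\mathcal{A}$. Then $\sqrt{x\mathcal{A}}=\mathfrak{p}=\sqrt{\mathfrak{m}\mathcal{A}}$, and since $\mathcal{B}$ is Noetherian this radical equality persists after extension to $\mathcal{B}$, so $\mathrm{ht}(x\mathcal{B})=\mathrm{ht}(\mathfrak{m}\mathcal{B})$. I would then argue that $\mathfrak{m}\mathcal{B}$ still has height $\ge 2$ in the localized $\mathcal{B}$: a prime of $\mathcal{B}_{\mathfrak p}$ containing $\mathfrak{m}\mathcal{B}_\mathfrak p$ is $S^{-1}\mathfrak{Q}$ for some prime $\mathfrak{Q}\supseteq\mathfrak{m}\mathcal{B}$ of the original $\mathcal{B}$, and $\mathrm{ht}(S^{-1}\mathfrak Q)=\mathrm{ht}(\mathfrak Q)\ge\mathrm{ht}(\mathfrak{m}\mathcal{B})\ge 2$, since localizing at $S\subseteq\mathcal{A}\subseteq\mathcal{B}$ does not change the height of a surviving prime. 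Comparing this with Krull's principal ideal theorem, $\mathrm{ht}(x\mathcal{B})\le 1$, I conclude $x\mathcal{B}=\mathcal{B}$, i.e.\ $x$ is a unit of $\mathcal{B}$. This forced collapse is the heart of the matter, and the step I expect to require the most care — the height bookkeeping under the localization at $\mathfrak p$ — is exactly where the hypothesis $\mathrm{ht}(\mathfrak{m}\mathcal{B})\ge 2$ is spent.

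To finish, fix $k\ge 1$ with $\mathfrak{p}^{k}\subseteq x\mathcal{A}$ and put $u:=x^{-1}\in\mathcal{B}$. Then $u\notin\overline{\mathcal{A}}$: a monic relation $u^{n}+a_{n-1}u^{n-1}+\dots+a_{0}=0$ with $a_{i}\in\mathcal{A}$, multiplied by $x^{n-1}$, would give $u=-(a_{n-1}+a_{n-2}x+\dots+a_{0}x^{n-1})\in\mathcal{A}$ and hence make $x$ a unit of the local ring $\mathcal{A}$, impossible since $x\in\mathfrak{m}\subseteq\mathfrak{p}$. On the other hand $\mathfrak{p}^{k}u\subseteq x\mathcal{A}\cdot u=\mathcal{A}\subseteq\overline{\mathcal{A}}$. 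Let $j$ be largest with $\mathfrak{p}^{j}u\not\subseteq\overline{\mathcal{A}}$ (so $0\le j<k$, with $\mathfrak{p}^{0}=\mathcal{A}$), and pick $a\in\mathfrak{p}^{j}$ with $au\notin\overline{\mathcal{A}}$. Then $\mathfrak{p}\cdot(au)\subseteq\mathfrak{p}^{j+1}u\subseteq\overline{\mathcal{A}}$, so $\mathrm{Ann}_{\mathcal{A}}(au+\overline{\mathcal{A}})$ contains $\mathfrak{p}$ and is proper (as $au\notin\overline{\mathcal{A}}$); by maximality of $\mathfrak{p}$ it equals $\mathfrak{p}$, whence $\mathfrak{p}\in\mathrm{Ass}_{\mathcal{A}}(\mathcal{B}/\overline{\mathcal{A}})$.
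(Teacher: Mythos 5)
Your proposal is correct, but it takes a genuinely different route from the paper's. The paper argues by contradiction at the level of $\mathfrak{m}$: assuming $\mathfrak{m}$ is not associated to $\mathcal{B}/\overline{\mathcal{A}}$, it uses Prp.~\ref{finite} to choose $h_1\in\mathfrak{m}$ avoiding all primes of $\mathrm{Ass}_{R}(\mathcal{B}/\mathcal{A})$ other than $\mathfrak{m}$ as well as the minimal primes of $\mathcal{A}$ and $\mathcal{B}$, shows that the kernel of $\mathcal{A}/h_1\mathcal{A}\rightarrow\mathcal{B}/h_1\mathcal{B}$ consists of nilpotents, and then a second avoidance $h_2\in\mathfrak{m}$ (this is where $\mathrm{ht}(\mathfrak{m}\mathcal{B})\geq 2$ is spent) forces every minimal prime of $\mathfrak{m}\mathcal{A}$ to have height at least $2$, a contradiction; finally it converts $\mathfrak{m}\in\mathrm{Ass}_{R}(\mathcal{B}/\overline{\mathcal{A}})$ into $\mathfrak{p}\in\mathrm{Ass}_{\mathcal{A}}(\mathcal{B}/\overline{\mathcal{A}})$ by the multiplier/primary-decomposition device from the proof of Thm.~\ref{main}~(i). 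You instead localize at $\mathfrak{p}$ and argue directly: minimality of $\mathfrak{p}$ over $\mathfrak{m}\mathcal{A}$ makes $\sqrt{\mathfrak{m}\mathcal{A}_{\mathfrak{p}}}$ the maximal ideal, Krull's principal ideal theorem together with the fact that heights of surviving primes are unchanged under localization turns a suitably chosen $x\in\mathfrak{m}$ into a unit of $\mathcal{B}_{\mathfrak{p}}$, and then $x^{-1}$, multiplied by an element of an appropriate power of $\mathfrak{p}$ chosen by a maximal-$j$ argument, is an explicit element whose annihilator modulo $\overline{\mathcal{A}_{\mathfrak{p}}}$ is exactly $\mathfrak{p}$. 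Your argument is more elementary and self-contained: it uses neither Prp.~\ref{finite} nor the completion/primary-decomposition machinery of Thm.~\ref{main}~(i), it exhibits an explicit witness, and it shows in passing that $\mathrm{ht}(\mathfrak{p})=1$ under the hypotheses; the paper's route delivers $\mathfrak{m}\in\mathrm{Ass}_{R}(\mathcal{B}/\overline{\mathcal{A}})$ first (the form used in Cor.~\ref{classical}), but that also follows from your conclusion by contracting the annihilator to $R$. Two points to phrase carefully, neither a genuine gap: after localizing, $R$ need not inject into $\mathcal{A}_{\mathfrak{p}}$, though you only ever use images of elements of $\mathfrak{m}$; and the prime avoidance producing $x\in\mathfrak{m}$ (rather than merely $x\in\mathfrak{m}\mathcal{A}_{\mathfrak{p}}$) should be run in $R$ after contracting the finitely many minimal primes of $\mathcal{A}_{\mathfrak{p}}$, exactly as the paper does when selecting $h_1$.
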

 
A typical situation when such $\mathcal{B}$ arises is when $\mathcal{A}$ is a Rees algebra of a module $\mathcal{M}$ that sits inside a free module $\mathcal{F}$ over a local ring $(R,\mathfrak{m})$ of dimension at least $2$. Then $\mathcal{B}$ can be taken to be the symmetric algebra of $\mathcal{F}$, which is a polynomial ring over $R$ and thus $\mathrm{ht}(\mathfrak{m}\mathcal{B}) \geq 2$.  
 
In this setup Thm.\ \ref{converse} along with  \cite[Prp.\ 8.5]{Rangachev}, which treats the case $\dim R=1$,  recovers results of  Burch \cite[Cor., pg. 373]{Burch} for Rees algebras of ideals, and of Katz and Rice \cite[Thm.\ 3.5.1]{Katz2}  for Rees algebras of modules embedded in free modules. More generally, in \cite[Thm.\ 8.3] {Rangachev} the author proved Thm. \ref{converse} assuming that $\mathcal{A}$ and $\mathcal{B}$ are standard graded $R$-algebras. 

In Prp.\ \ref{nice embedding} we prove that if $\mathcal{A}$ is a graded domain over $R$ with $\dim R \geq 2$, an embedding of $\mathcal{A}$ in a graded $R$-algebra $\mathcal{B}$ satisfying the hypothesis of Thm.\ \ref{converse} exists as a consequence of Noether normalization. Such embeddings along with Thm.\ \ref{main}  and Thm.\ \ref{converse} play an important role in the theory of local volumes of relatively ample invertible sheaves (see Scts.\ $2,3$ and $4$ in \cite{Rangachev2}).

%As an immediate application of Thm.\ \ref{main} with $R=\mathcal{A}_{\mathfrak{p}}$ where $\mathfrak{p}$ is prime ideal in $\mathcal{A}$ of height at least $2$, one derives the following criterion for integral dependence. 

%\begin{theorem}[Integrality criterion]\label{integral}
%Let $\mathcal{A} \subset \mathcal{B}$ be rings with $\mathcal{A}$ Noetherian. 
%Assume 
%\begin{list}{$\circ$}{}  
%\item $\mathcal{A}_{\mathfrak{p}} \subset \mathcal{B}_{\mathfrak{p}}$ is integral for each prime  $\mathfrak{p}$ in $\mathcal{A}$ with $\mathrm{ht}(\mathfrak{p}) \leq 1$. 
%\vspace{.1cm}
%\item Each minimal prime of $\mathcal{B}$ contracts to $\mathcal{A}$ to a prime
%of height at most one. 
%\end{list}
%Then $\mathcal{B}$ is integral over $\mathcal{A}$. 
%\end{theorem}
%Thm.\ \ref{integral} was proved by Simis, Ulrich and Vasconcelos (Thm.\ 4.1 in \cite{SUV}) under the additional assumptions that $\mathcal{A}$ is locally equidimensional and universally catenary, and that the minimal primes of $\mathcal{B}$ contract to minimal primes of $\mathcal{A}$. 

{\bf  Acknowledgements.} I would like to thank Steven Kleiman and Madhav Nori for helpful and stimulating conversations, and for providing me with comments on improving the exposition of this paper, and the anonymous referee for suggesting a different proof of Thm.\ \ref{main}, which is outlined in the appendix to this paper, and for his useful comments. I was partially supported by the University of Chicago FACCTS grant ``Conormal and Arc Spaces in the Deformation Theory of Singularities.''

\section{Proofs}
%The proof of Thm.\ \ref{main} generalizes that of Thm.\ 5.4 in \cite{Rangachev}. It uses some of the main ideas involved in the proof of the Mori--Nagata theorem. 

We begin with two propositions. Prp.\ \ref{finite} generalizes Lem.\ 3.1 in \cite{Katz3} (cf.\ Prp.\ 7.1 in \cite{Rangachev}). It's central to the proof of Thm.\ \ref{main} \rm{(ii)} and Thm.\ \ref{converse}. The proof of Prp.\ \ref{finite} is based on the generic freeness lemma of Hochster and Roberts (see Lem.\ 8.1 in \cite{Hochster} or the lemma preceeding Thm.\ 24.1 in \cite{Matsumura}). 

The second proposition Prp.\ \ref{faithful flatness} and the corollary that follows it generalize Lem.\ 5.3 (1) in \cite{Rangachev}. We use these statements in the proof of Thm.\ \ref{main} to pass to the situation when $\mathcal{A}$ is a reduced Nagata (pseudo-geometric) ring, which guarantees that $\overline{\mathcal{A}}$ is Noetherian. 

Let $R$ be a ring, $\mathcal{M}$ a module. A prime ideal $\mathfrak{p}$ is said to be {\it associated} to $\mathcal{M}$ if there is a nonzero $m \in \mathcal{M}$ with $\mathfrak{p}=\mathrm{Ann}(m)$. The set of associated primes is denoted by $\mathrm{Ass}_{R}(\mathcal{M})$. 

\begin{proposition}\label{finite}
Let $R$ be a Noetherian ring and let $\mathcal{A} \subset \mathcal{B}$ be Noetherian $R$-algebras. Assume $\mathcal{B}$ is a finitely generated $\mathcal{A}$-algebra. Then  each prime in  $\mathrm{Ass}_{R}(\mathcal{B}/\mathcal{A})$ is a contraction of a prime in $\mathrm{Ass}_{\mathcal{A}}(\mathcal{B}/\mathcal{A})$. Furthermore, $\mathrm{Ass}_{\mathcal{A}}(\mathcal{B}/\mathcal{A})$ is finite. 
\end{proposition}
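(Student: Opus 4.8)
The plan is to treat the two assertions separately. For the finiteness of $\mathrm{Ass}_{\mathcal{A}}(\mathcal{B}/\mathcal{A})$ I would replace $\mathcal{B}/\mathcal{A}$, which is only an $\mathcal{A}$-module, by a finitely generated module over a larger Noetherian ring; for the contraction statement I would invoke the base-change behaviour of associated primes along the structure map $R\to\mathcal{A}$.

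Write $\mathcal{B}=\mathcal{A}[b_1,\dots,b_n]$ and filter $\mathcal{B}$ by the $\mathcal{A}$-submodules $\mathcal{B}_{\le d}$ generated by the monomials in the $b_i$ of total degree $\le d$, so that $\mathcal{B}_{\le 0}=\mathcal{A}$, $\mathcal{B}_{\le d}\subseteq\mathcal{B}_{\le d+1}$, and $\bigcup_d\mathcal{B}_{\le d}=\mathcal{B}$. Setting $F_d:=\mathcal{B}_{\le d}/\mathcal{A}\subseteq\mathcal{B}/\mathcal{A}$ gives an exhaustive increasing filtration of $\mathcal{B}/\mathcal{A}$ with $F_0=0$ and $F_d/F_{d-1}\cong\mathcal{B}_{\le d}/\mathcal{B}_{\le d-1}$. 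Since $\mathrm{Ass}$ commutes with directed unions and $\mathrm{Ass}(F_d)\subseteq\mathrm{Ass}(F_{d-1})\cup\mathrm{Ass}(F_d/F_{d-1})$, an induction yields
\[\mathrm{Ass}_{\mathcal{A}}(\mathcal{B}/\mathcal{A})\subseteq\bigcup_{d\ge 1}\mathrm{Ass}_{\mathcal{A}}(\mathcal{B}_{\le d}/\mathcal{B}_{\le d-1})=\mathrm{Ass}_{\mathcal{A}}(G_{+}),\]
where $G:=\bigoplus_{d\ge 0}\mathcal{B}_{\le d}/\mathcal{B}_{\le d-1}$ is the associated graded ring of the filtration and $G_{+}:=\bigoplus_{d\ge 1}\mathcal{B}_{\le d}/\mathcal{B}_{\le d-1}$. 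The key observation is that $G_0=\mathcal{A}$ and that $G$ is generated over $\mathcal{A}$ by the degree-one images of $b_1,\dots,b_n$, so $G$ is a finitely generated $\mathcal{A}$-algebra, hence Noetherian, and $G_{+}$ is an ideal of $G$, hence a finitely generated $G$-module.

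Both conclusions would then follow from the standard fact that if $\varphi\colon S'\to S$ is a ring homomorphism with $S$ Noetherian and $M$ is any $S$-module, then $\mathrm{Ass}_{S'}(M)=\{\varphi^{-1}(\mathfrak q):\mathfrak q\in\mathrm{Ass}_{S}(M)\}$. Applying this with $S'=\mathcal{A}$, $S=G$, $M=G_{+}$, the set $\mathrm{Ass}_{G}(G_{+})$ is finite because $G$ is Noetherian and $G_{+}$ finitely generated, so $\mathrm{Ass}_{\mathcal{A}}(G_{+})$ is finite, and hence so is $\mathrm{Ass}_{\mathcal{A}}(\mathcal{B}/\mathcal{A})$; applying it with $S'=R$, $S=\mathcal{A}$, $M=\mathcal{B}/\mathcal{A}$ shows that every $\mathfrak p\in\mathrm{Ass}_{R}(\mathcal{B}/\mathcal{A})$ is the contraction of some $\mathfrak q\in\mathrm{Ass}_{\mathcal{A}}(\mathcal{B}/\mathcal{A})$, which is the second assertion. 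For the fact itself, the inclusion $\supseteq$ is immediate from $\varphi^{-1}(\mathrm{Ann}_{S}(m))=\mathrm{Ann}_{S'}(m)$; for $\subseteq$, given $\mathfrak p=\mathrm{Ann}_{S'}(m)$ I would localize $M$ at $\varphi(S'\setminus\mathfrak p)$, observe that in $M_{\mathfrak p}$ the element $m/1$ is nonzero with $\mathrm{Ann}_{S'_{\mathfrak p}}(m/1)=\mathfrak p S'_{\mathfrak p}$, that $S_{\mathfrak p}:=\varphi(S'\setminus\mathfrak p)^{-1}S$ is Noetherian, and that the submodule of $S_{\mathfrak p}\cdot(m/1)$ killed by $\mathfrak p S_{\mathfrak p}$ is nonzero; an associated prime of that submodule over the Noetherian ring $S_{\mathfrak p}/\mathfrak p S_{\mathfrak p}$ lifts to $\mathfrak q'\in\mathrm{Ass}_{S_{\mathfrak p}}(M_{\mathfrak p})$ lying over $\mathfrak p S'_{\mathfrak p}$, and since $S$ is Noetherian $\mathrm{Ass}$ commutes with this localization, so $\mathfrak q'=\mathfrak q S_{\mathfrak p}$ for a suitable $\mathfrak q\in\mathrm{Ass}_{S}(M)$ with $\varphi^{-1}(\mathfrak q)=\mathfrak p$.

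The conceptual step is the reduction, via the degree filtration on $\mathcal{B}$, from the typically non-finitely-generated $\mathcal{A}$-module $\mathcal{B}/\mathcal{A}$ to the finitely generated $G$-module $G_{+}$; once this is in place the rest is formal. The main technical point is the $\subseteq$ direction of the base-change fact — extracting an associated prime in the closed fibre of $\varphi_{\mathfrak p}\colon S'_{\mathfrak p}\to S_{\mathfrak p}$ and checking that it is associated to $M_{\mathfrak p}$ — which rests entirely on Noetherianity of the localized ring $S_{\mathfrak p}$. An alternative route to the finiteness, closer to what one expects from generic freeness, is to reduce to the case where $\mathcal{A}$ is a domain via its minimal primes, use the generic freeness lemma to produce $0\ne f\in\mathcal{A}$ with $(\mathcal{B}/\mathcal{A})_f$ free over $\mathcal{A}_f$ (so that only finitely many associated primes avoid $V(f)$), and then induct on $\dim\mathcal{A}$ to control the associated primes containing $f$.
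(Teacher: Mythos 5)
Your proposal is correct, but it reaches the finiteness statement by a genuinely different mechanism than the paper. The paper first reduces to a single algebra generator, $\mathcal{B}=\mathcal{A}[t]$, filters by $\mathcal{A}_i=\mathcal{A}+\mathcal{A}t+\cdots+\mathcal{A}t^i$, and exploits the multiplication-by-$t$ surjections $\mathcal{A}_i/\mathcal{A}_{i-1}\to\mathcal{A}_{i+1}/\mathcal{A}_i$: since $\mathcal{A}_1/\mathcal{A}_0$ is a Noetherian module the kernels of the composite maps stabilize, so the successive quotients are eventually isomorphic and only finitely many sets $\mathrm{Ass}_{\mathcal{A}}(\mathcal{A}_{i+1}/\mathcal{A}_i)$ occur. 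You instead keep all generators at once, pass to the associated graded ring $G$ of the degree filtration, and use that $G$ is a finitely generated $\mathcal{A}$-algebra (hence Noetherian) with $G_{+}$ a finitely generated $G$-module, then contract $\mathrm{Ass}_{G}(G_{+})$ along $\mathcal{A}\to G$; this avoids both the one-generator-at-a-time reduction and the stabilization trick, at the cost of invoking the contraction lemma already in the finiteness step — a lemma you need anyway for the second assertion, so nothing extra is really spent, whereas the paper's stabilization argument is more elementary at that point. Your bookkeeping via $\mathrm{Ass}(F_d)\subseteq\mathrm{Ass}(F_{d-1})\cup\mathrm{Ass}(F_d/F_{d-1})$ and the compatibility of $\mathrm{Ass}$ with directed unions is in fact slightly more careful than the paper's closing step, which asserts directly that the annihilator $\mathfrak{p}$ of $\tilde b$ is associated to $\mathcal{A}_j/\mathcal{A}_{j-1}$ for the specific $j$ with $b\in\mathcal{A}_j\setminus\mathcal{A}_{j-1}$. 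For the second assertion you state and prove the general base-change fact (this is exactly Stacks Tag 05DZ, which the paper cites as an alternative) by localizing at $\mathfrak{p}$ and extracting an associated prime of the submodule killed by $\mathfrak{p}$ in the fibre; the paper's own included proof argues differently, using that the radical of $\mathrm{Ann}_{\mathcal{A}}(\tilde b)$ is a finite intersection of primes belonging to $\mathrm{Ass}_{\mathcal{A}}(\mathcal{B}/\mathcal{A})$ together with primality of the contraction. Both routes are sound; your alternative closing suggestion via generic freeness is, amusingly, closer to the strategy the paper announces before the proposition (crediting Hochster--Roberts) than to the argument it actually writes down.
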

\begin{proof}  
The first part of the proposition is \cite[\href{http://stacks.math.columbia.edu/tag/05DZ}{Tag 05DZ}]{Stacks}. For completeness we include our own proof which is part of \cite[Prp.\ 7.1]{Rangachev}. Let $\mathfrak{q} \in \mathrm{Ass}_{R}(\mathcal{B}/\mathcal{A})$ and let $\tilde{b} \in \mathcal{B}/\mathcal{A}$ is such $\mathfrak{q} = \mathrm{Ann}_{R}(\tilde{b})$. Set $\mathcal{I}(\tilde{b})=\mathrm{Ann}_{\mathcal{A}}(\tilde{b})$. Then $\mathcal{I}(\tilde{b}) \cap R = \mathfrak{q}$. Because $\mathfrak{q}$ is prime in $R$, then the contraction of the radical of $\mathcal{I}(\tilde{b})$ to $R$ is $\mathfrak{q}$. But the radical of $\mathcal{I}(\tilde{b})$ is the intersection of finitely many primes in $\mathrm{Ass}_{\mathcal{A}}(\mathcal{B}/\mathcal{A})$. Thus, there exists a prime $Q \in \mathrm{Ass}_{\mathcal{A}}(\mathcal{B}/\mathcal{A})$  whose contraction to $R$ is $\mathfrak{q}$. 

Next, we show that $\mathrm{Ass}_{\mathcal{A}}(\mathcal{B}/\mathcal{A})$
is finite. Let $t_1, \ldots, t_k$ be the generators of $\mathcal{B}$ as an $\mathcal{A}$-algebra. Then by
breaking the filtration 
$$\mathcal{A} \subset \mathcal{A}[t_1] \subset \ldots 
\subset \mathcal{A}[t_1, \ldots, t_k] = \mathcal{B}$$
into short exact sequences, we see it's enough to show that $$\bigcup_{i=1}^{k}\mathrm{Ass}_{\mathcal{A}}(\mathcal{A}[t_1, \ldots, t_{i}]/\mathcal{A}[t_1, \ldots, t_{i-1}])$$ is finite. By the first part, it's enough to show that $\mathrm{Ass}_{\mathcal{A}[t_1,\ldots, t_{i-1}]}(\mathcal{A}[t_1, \ldots, t_{i}]/\mathcal{A}[t_1, \ldots, t_{i-1}])$ is finite. Hence it's enough to prove the finiteness part of the proposition for the case  $\mathcal{B}=\mathcal{A}[t]$ with $t \in \mathcal{B}$. Set $\mathcal{A}_i := \mathcal{A} + \mathcal{A}t+ \cdots + \mathcal{A}t^i$. Note that each $\mathcal{A}_i$ is a finitely generated $\mathcal{A}$-module. Consider the exact sequence 
$$\mathcal{A}_1/\mathcal{A}_0 \rightarrow \mathcal{A}_{2}/\mathcal{A}_{1} \rightarrow \cdots \rightarrow \mathcal{A}_{n+1}/\mathcal{A}_{n} \rightarrow \cdots$$
where each arrow is given by multiplication by $t$ and thus it's surjective. Denote by $\phi_{n}$ the composite map from $\mathcal{A}_1/\mathcal{A}_0$ to $\mathcal{A}_{n+1}/\mathcal{A}_{n}$. Then the chain
$$\mathrm{Ker}(\phi_1)\subset \ldots \subset \mathrm{Ker}(\phi_{n}) \subset \ldots$$
must stabilize eventually as $\mathcal{A}_1/\mathcal{A}_0$ is a Noetherian module. This shows that for $n \gg 0$
the map $\mathcal{A}_n/\mathcal{A}_{n-1} \rightarrow \mathcal{A}_{n+1}/\mathcal{A}_{n}$ is an isomorphism. Hence $\bigcup_{i=0}^{\infty} \mathrm{Ass}_{\mathcal{A}}(\mathcal{A}_{i+1}/\mathcal{A}_{i})$ is finite. 

%But $\mathrm{Ass}_{\mathcal{A}}(\mathcal{B}/\mathcal{A}) \subset \bigcup_{i=0}^{\infty} \mathrm{Ass}_{\mathcal{A}}(\mathcal{A}_{i+1}/\mathcal{A}_{i})$. So $\mathrm{Ass}_{\mathcal{A}}(\mathcal{B}/\mathcal{A})$ is finite, too. 

Let $\mathfrak{p} \in \mathrm{Ass}_{\mathcal{A}}(\mathcal{B}/\mathcal{A})$. Let $b \in \mathcal{B}$ whose image $\tilde{b}$ in $\mathcal{B}/\mathcal{A}$ is such that $\mathrm{Ann}_{\mathcal{A}}(\tilde{b}) = \mathfrak{p}$. There exists $j \geq 1$ such that $b \in \mathcal{A}_j$. Then $\mathfrak{p} \in \mathrm{Ass}_{\mathcal{A}}(\mathcal{A}_j/\mathcal{A})$. But the latter set is contained $\bigcup_{i=0}^{\infty} \mathrm{Ass}_{\mathcal{A}}(\mathcal{A}_{i+1}/\mathcal{A}_{i})$. Thus $\mathrm{Ass}_{\mathcal{A}}(\mathcal{B}/\mathcal{A})$ is finite. 
\end{proof}

\begin{proposition}\label{faithful flatness}
Let $R$ be a ring and let $\mathcal{A} \subset \mathcal{B}$ be $R$-algebras. Let $R \rightarrow S$ be a faithfully flat ring map. Denote by $\overline{\mathcal{A} \otimes_{R}S}$ the integral closure of $\mathcal{A} \otimes_{R}S$ in $\mathcal{B} \otimes_{R}S$. Then 
$$\overline{\mathcal{A} \otimes_{R}S} \cap \mathcal{B} = \overline{\mathcal{A}}$$
\end{proposition}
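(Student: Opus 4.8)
The plan is to establish the two inclusions of the asserted equality separately, all inside $\mathcal{B}\otimes_R S$. First I would fix the identifications that make the statement meaningful: since $R\to S$ is flat, tensoring the inclusion $\mathcal{A}\subset\mathcal{B}$ over $R$ with $S$ keeps it injective, so $\mathcal{A}\otimes_R S\subset\mathcal{B}\otimes_R S$, and since faithful flatness is stable under base change the maps $\mathcal{A}\to\mathcal{A}\otimes_R S$ and $\mathcal{B}\to\mathcal{B}\otimes_R S$ are faithfully flat, in particular injective, so $\mathcal{B}$ is a subring of $\mathcal{B}\otimes_R S$ and the intersection in the statement is taken there. The inclusion $\overline{\mathcal{A}}\subseteq\overline{\mathcal{A}\otimes_R S}\cap\mathcal{B}$ is then immediate: if $b\in\overline{\mathcal{A}}$ satisfies $b^{n}+a_{1}b^{n-1}+\cdots+a_{n}=0$ with $a_{i}\in\mathcal{A}$, applying the map $\mathcal{B}\to\mathcal{B}\otimes_R S$ shows $b\otimes 1$ is integral over $\mathcal{A}\otimes_R S$, while $b$ visibly lies in $\mathcal{B}$.

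For the reverse inclusion, fix $b\in\mathcal{B}$ whose image $b\otimes 1$ in $\mathcal{B}\otimes_R S$ is integral over $\mathcal{A}\otimes_R S$; the goal is to prove $b\in\overline{\mathcal{A}}$. I would use the standard fact that $b$ is integral over $\mathcal{A}$ if and only if the $\mathcal{A}$-subalgebra $\mathcal{A}[b]\subseteq\mathcal{B}$ is a finite $\mathcal{A}$-module. Flatness of $S$ over $R$ gives an injection $\mathcal{A}[b]\otimes_R S\hookrightarrow\mathcal{B}\otimes_R S$ whose image is exactly the $(\mathcal{A}\otimes_R S)$-subalgebra of $\mathcal{B}\otimes_R S$ generated by $b\otimes 1$; that is, $\mathcal{A}[b]\otimes_R S=(\mathcal{A}\otimes_R S)[b\otimes 1]$ under these identifications. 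Since $b\otimes 1$ is integral over $\mathcal{A}\otimes_R S$, say satisfying a monic equation of degree $n$, this algebra is generated as an $(\mathcal{A}\otimes_R S)$-module by $1,b\otimes 1,\dots,(b\otimes 1)^{n-1}$, hence is a finite $\mathcal{A}\otimes_R S$-module.

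It then remains to descend module-finiteness along the faithfully flat map $\mathcal{A}\to\mathcal{A}\otimes_R S$, i.e.\ to show that an $\mathcal{A}$-module $M$ with $M\otimes_R S$ finite over $\mathcal{A}\otimes_R S$ is finite over $\mathcal{A}$; applying this to $M=\mathcal{A}[b]$ then gives $b\in\overline{\mathcal{A}}$ and finishes the proof. For this I would write $M=\bigcup_{\lambda}M_{\lambda}$ as the filtered union of its finitely generated $\mathcal{A}$-submodules; because $\mathcal{A}\otimes_R S$ is flat over $\mathcal{A}$, tensoring preserves this as an increasing union $M\otimes_R S=\bigcup_{\lambda}(M_{\lambda}\otimes_R S)$ of submodules, so finiteness of $M\otimes_R S$ forces $M_{\lambda}\otimes_R S=M\otimes_R S$ for some $\lambda$, that is $(M/M_{\lambda})\otimes_R S=0$, whence $M=M_{\lambda}$ by faithful flatness. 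This last descent step is the only part that is not routine bookkeeping with flat base change of the chain $\mathcal{A}\subset\mathcal{A}[b]\subset\mathcal{B}$ — finite generation is not automatically inherited from a base change, so faithful flatness is used in an essential way — but it is entirely standard and could equally be quoted as faithfully flat descent of module-finiteness in place of the colimit argument above.
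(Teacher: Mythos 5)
Your proposal is correct and takes essentially the same route as the paper: one inclusion by persistence of integral dependence, and the other by descending module-finiteness of $\mathcal{A}[b]$ along the faithfully flat map $\mathcal{A}\to\mathcal{A}\otimes_R S$, which yields integrality of $b$ over $\mathcal{A}$. The only cosmetic difference is that the paper works with the explicit finitely generated submodule $\mathcal{M}=\mathcal{A}+\mathcal{A}b+\cdots+\mathcal{A}b^{n-1}$ and concludes $\mathcal{A}[b]=\mathcal{M}$ from $(\mathcal{A}[b]/\mathcal{M})\otimes_R S=0$, whereas you prove the general descent of module-finiteness by a filtered-union argument.
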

\begin{proof}
By flatness of $R \rightarrow S$ it follows that $\mathcal{A} \otimes_{R} S$ is contained in $\mathcal{B} \otimes_{R} S$. By faithful flatness of $R \rightarrow S$ it follows that $\mathcal{A}$ and $\mathcal{B}$ inject into $\mathcal{A} \otimes_{R} S$ and $\mathcal{B} \otimes_{R} S$ respectively. Suppose $b \in \mathcal{B}$ is integral over $\mathcal{A} \otimes_{R}S$, i.e.\ $b$ satisfies the following relation 
\begin{equation}\label{int. dep.} 
b^n + a_{n-1}b^{n-1}+ \cdots + a_0 = 0
\end{equation}
where $a_i \in \mathcal{A} \otimes_{R}S$. Set $\mathcal{M}:= \mathcal{A}+ \mathcal{A}b+ \cdots + \mathcal{A}b^{n-1}$. Then $\mathcal{M} \subset \mathcal{A}[b]$. By (\ref{int. dep.}) we have $\mathcal{M} \otimes_{R}S=\mathcal{A}[b] \otimes_{R}S$, or 
$(\mathcal{A}[b]/\mathcal{M}) \otimes_{R}S=0$. Because $R \rightarrow S$ is faithfully flat, then $\mathcal{A}[b]=\mathcal{M}$ which implies that $b$ is integral over $\mathcal{A}$. Hence $\overline{\mathcal{A} \otimes_{R}S} \cap \mathcal{B} \subset \overline{\mathcal{A}}$. The opposite inclusion follows trivially from persistence of integral dependence. 
\end{proof}

%\begin{corollary}\label{ass. ff.}
%Assume $(R,\mathfrak{m})$ is a Noetherian local ring. Denote by $\widehat{R}$ the completion of $R$ with respect to $\mathfrak{m}$. Set $\widehat{\mathcal{A}}=\mathcal{A} \otimes_{R}\widehat{R}$ and $\widehat{\mathcal{B}}=\mathcal{B} \otimes_{R}\widehat{R}$. If $\widehat{\mathfrak{m}} \not \in \mathrm{Ass}_{\widehat{R}}(\widehat{\mathcal{B}}/\overline{\widehat{\mathcal{A}}})$,
%then $\mathfrak{m} \not \in \mathrm{Ass}_{R} (\mathcal{B}/\overline{\mathcal{A}})$. 
%\end{corollary}
%\begin{proof}
%Suppose that there exists $b \in \mathcal{B}$ such that $\mathfrak{m}b \in \overline{\mathcal{A}}$ with $b \not \in \overline{\mathcal{A}}$. Then $\widehat{\mathfrak{m}}b \in \overline{\widehat{\mathcal{A}}}$ by persistence of integral closure. But  $\widehat{\mathfrak{m}} \not \in \mathrm{Ass}_{\widehat{R}}(\widehat{\mathcal{B}}/\overline{\widehat{\mathcal{A}}})$. Thus $b \in \overline{\widehat{\mathcal{A}}}$. By Prp.\ \ref{faithful flatness} $b \in \overline{\widehat{\mathcal{A}}} \cap \mathcal{B} = \overline{\mathcal{A}}$ which is a contradiction. 
%\end{proof}

\newpage
\vspace{1cm}
\begin{center}
{\it Proof of Theorem \ref{main}}
\end{center}
\vspace{.2cm}
Consider $\rm{(i)}$. We perform several reduction steps that will allow us to assume that $\mathcal{A}$ is a reduced local complete ring. Let $b \in \mathcal{B}$ be such that $\mathfrak{q}$ is the annihilator of the image of $b$ in $\mathcal{B}/\overline{\mathcal{A}}$. Set $\mathfrak{p}:=\mathfrak{q} \cap \mathcal{A}$. Suppose $\mathfrak{q}$ is contained in a minimal prime of $\mathcal{B}$. Then by hypothesis $\mathrm{ht}(\mathfrak{p}) \leq 1$. Thus by incomparability $\mathrm{ht}(\mathfrak{q}) \leq 1$ and we are done. So we can assume that $\mathfrak{q}$ is not contained in a minimal prime of $\mathcal{B}$. Then by prime avoidance we select $h \in \mathfrak{q}$ that avoids the minimal primes of $\mathcal{B}$. 
By Prop.\ 2.16 in \cite{Huneke} $\overline{\mathcal{A}_\mathfrak{p}}=\overline{\mathcal{A}}_{\mathfrak{p}}$. Consider $\mathcal{A}_{\mathfrak{p}} \subset \overline{\mathcal{A}_\mathfrak{p}} \subset \mathcal{B}_\mathfrak{p}$. Then $\mathfrak{q}\overline{\mathcal{A}_\mathfrak{p}}$ is associated to $\mathcal{B}_\mathfrak{p}/\overline{\mathcal{A}_\mathfrak{p}}$. Moreover, $\mathfrak{q}\overline{\mathcal{A}_\mathfrak{p}}$ is maximal because $\overline{\mathcal{A}_\mathfrak{p}}$ is integral over $\mathcal{A}_\mathfrak{p}$ and $\mathfrak{q}\overline{\mathcal{A}_\mathfrak{p}} \cap \mathcal{A}_\mathfrak{p} = \mathfrak{p}\mathcal{A}_\mathfrak{p}$. So we can assume that $\mathcal{A}$ is local with maximal ideal $\mathfrak{p}$.

Let $\widehat{\mathcal{A}}$ be the completion of $\mathcal{A}$ with respect to $\mathfrak{p}$. Set $\mathcal{A}':=\overline{\mathcal{A}} \otimes_{\mathcal{A}} \widehat{\mathcal{A}}$ and $\mathcal{B}':= \mathcal{B} \otimes_{\mathcal{A}} \widehat{\mathcal{A}}$. By flatness $\mathcal{A}'/\mathfrak{q}\mathcal{A}' = (\overline{\mathcal{A}}/\mathfrak{q}\overline{\mathcal{A}}) \otimes_{\mathcal{A}} \widehat{\mathcal{A}}.$ But $\overline{\mathcal{A}}/\mathfrak{q}\overline{\mathcal{A}} \otimes_{\mathcal{A}} \mathcal{A}/\mathfrak{p}=\overline{\mathcal{A}}/\mathfrak{q}\overline{\mathcal{A}}$ and $\mathcal{A}/\mathfrak{p} \otimes_{\mathcal{A}} \widehat{\mathcal{A}}=\mathcal{A}/\mathfrak{p}.$ Thus by the associativity of the tensor product $(\overline{\mathcal{A}}/\mathfrak{q}\overline{\mathcal{A}}) \otimes_{\mathcal{A}} \widehat{\mathcal{A}}= \overline{\mathcal{A}}/\mathfrak{q}\overline{\mathcal{A}}$. Therefore, $\mathfrak{q}\mathcal{A}'$ is maximal. 

Because $\mathcal{B} \rightarrow \mathcal{B}'$ and $\overline{\mathcal{A}} \rightarrow \mathcal{A}'$ are faithfully flat, then by Lem.\ B.1.3 and Prp.\ B.2.4 in \cite{Huneke} $h$ avoids the minimal primes of $\mathcal{B}'$ and $\mathrm{ht}(\mathfrak{q}\mathcal{A}') = \mathrm{ht}(\mathfrak{q})$. By Lem.\ 5.2 in \cite{Rangachev} we can replace $\widehat{\mathcal{A}}, \mathcal{A}'$ and $\mathcal{B}'$ by their reduced structures. The height hypothesis remain intact. Because $\widehat{\mathcal{A}}$ is a reduced complete local ring and $\mathcal{B}'$ is a finitely generated $\widehat{\mathcal{A}}$-algebra, then $\overline{\widehat{\mathcal{A}}}$ is module-finite over $\widehat{\mathcal{A}}$ by \cite[\href{http://stacks.math.columbia.edu/tag/03GH}{Tag 03GH}]{Stacks} (cf.\  Ex.\ 9.7 in \cite{Huneke} and \cite[\href{http://stacks.math.columbia.edu/tag/037J}{Tag 037J}]{Stacks}). But so is $\mathcal{A}'$ as $\mathcal{A}' \subset \overline{\widehat{\mathcal{A}}}$. In particular, $\mathcal{A}'$ is Noetherian and so $\mathfrak{q}\mathcal{A}'$ is finitely generated. 

Clearly $b  \cdot \mathfrak{q}\mathcal{A}' \subset \mathcal{A}'.$ So, either $b \in \mathcal{A}'$ or $\mathfrak{q}\mathcal{A}'$ is the annihilator of the 
the image of $b$ in $\mathcal{B}'/\mathcal{A}'$ viewed as an $\mathcal{A}'$-module. The former is impossible because if $b \in \mathcal{A}'$, then by Prp.\ \ref{faithful flatness} we would get $b \in \overline{\mathcal{A}}$.  

As $h$ avoids the minimal primes of $\mathcal{B}'$ and the latter is reduced, then $h$ is regular in $\mathcal{B}'$. 
Suppose $b\cdot \mathfrak{q}\mathcal{A}' \subset \mathfrak{q}\mathcal{A}'$. As $h \in \mathfrak{q}\mathcal{A}'$, then $\mathfrak{q}\mathcal{A}'$ is a finitely generated faithful  $\mathcal{A}'$-module. Then by the Determinantal Trick Lemma (see Lem.\ 2.1.8 in \cite{Huneke}) $b$ is integral over $\mathcal{A}'$. Thus by Prp.\ \ref{faithful flatness} we get $b \in \overline{\widehat{\mathcal{A}}} \cap \mathcal{B} = \overline{\mathcal{A}}$ which is impossible.

Therefore, $b\cdot \mathfrak{q}\mathcal{A}' \not \subset \mathfrak{q}\mathcal{A}'.$ Because $h$ is regular in $\mathcal{B}'$ we have
\begin{equation}\label{saturation}
\mathfrak{q}\mathcal{A}' = (h\mathcal{A}':_{ \mathcal{A}'}  hb).  
\end{equation}

Consider the localization $\mathcal{A}'_{\mathfrak{q}\mathcal{A}'}$ of $\mathcal{A}'$ at $\mathfrak{q}\mathcal{A}'$. Then there exists $z \in \mathfrak{q}$ such that $bz=\varepsilon$ where $\varepsilon$ is a unit in $\mathcal{A}'_{\mathfrak{q}\mathcal{A}'}$. This yields the identity $hbz=h\varepsilon$. Because $h$ is regular, then so is $hb$. Let $z' \in \mathfrak{q}\mathcal{A}'$. Then by (\ref{saturation}) there exists $u \in h\mathcal{A}'$ such that $hbz'=hu$ or $hb(z'-u\varepsilon^{-1}z)=0$. But $hb$ is regular, so $z'=u\varepsilon^{-1}z$. Thus $\mathfrak{q}\mathcal{A}'=(z)$, i.e.\ $\mathcal{A}'_{\mathfrak{q}\mathcal{A}'}$ is a DVR. 
In particular, $\mathrm{ht}(\mathfrak{q}\mathcal{A}')=\mathrm{ht}(\mathfrak{q})=1$. This completes the proof of the first part of $(\rm{i})$. 

Suppose $\mathcal{B}$ is reduced and $\mathfrak{q}$ is not contained in a minimal prime of $\mathcal{B}$. As above identify $\mathcal{A}$ with its localization at $\mathfrak{p}$. By what we just proved there exists $z \in \mathfrak{q}$ such that each generating set for the maximal ideal $\mathfrak{q}\mathcal{A}'$ consists of $z$ and nilpotents in $\mathcal{B}'$ up to multiplication with units. Consider the faithfully flat map 
$\overline{\mathcal{A}}_{\mathfrak{q}} \rightarrow \mathcal{A}'_{\mathfrak{q}\mathcal{A}'}$. An element in $\mathfrak{q}\overline{\mathcal{A}}_{\mathfrak{q}}$ is a nilpotent if and only if its image in $\mathcal{A}'_{\mathfrak{q}\mathcal{A}'}$ is nilpotent. But there are no nilpotents in $\mathfrak{q}\overline{\mathcal{A}}_{\mathfrak{q}}$ because $\mathcal{B}$ is reduced. Thus $\mathfrak{q}\overline{\mathcal{A}}_{\mathfrak{q}}=(z)$, i.e. $\overline{\mathcal{A}}_{\mathfrak{q}}$ is a DVR.

Next, suppose $\mathfrak{p} \in \mathrm{Ass}_{\mathcal{A}}(\mathcal{B}/\overline{\mathcal{A}})$. As above, we can assume that $\mathcal{A}$ is local with maximal ideal $\mathfrak{p}$. We want to show that there exists a maximal ideal $\mathfrak{q} \in \overline{\mathcal{A}}$ with $\mathrm{ht}(\mathfrak{q}) \leq 1$. 
%If $\mathfrak{p}$ is a minimal prime of $\mathcal{A}$, then by incomparability any prime $\mathfrak{q}$ in $\overline{A}$  contracting to $\mathfrak{p}$ is minimal and we are done. So assume that $\mathrm{ht}(\mathfrak{p}) \geq 1$. 

Let $b_1 \in \mathcal{B}$ be such that the annihilator of the image of $b_1$ in $\mathcal{B}/\overline{\mathcal{A}}$ viewed as an $\mathcal{A}$-module is $\mathfrak{p}$. Preserve the notation from above. Let $\mathcal{I}$ be the annihilator of the image of $b_1$ in $\mathcal{B}'_{\mathrm{red}}/\mathcal{A}'_{\mathrm{red}}$ viewed as an $\mathcal{A}'_{\mathrm{red}}$-module. As $\mathcal{A}'_{\mathrm{red}}$ is Noetherian, there exists a primary decomposition of $\mathcal{I}$ in $\mathcal{A}'_{\mathrm{red}}$: 
$$\mathcal{I} = V_1 \cap \ldots \cap V_s.$$
Because $\mathcal{I}$ contains the maximal ideal  $\mathfrak{p}\widehat{\mathcal{A}}_{\mathrm{red}}$, each $V_i$ is primary to a maximal ideal $\mathfrak{m}_i$ in $\mathcal{A}'_{\mathrm{red}}$. By faithful flatness each $\mathfrak{m}_i$ is equal to $\mathfrak{q}_i\mathcal{A}'_{\mathrm{red}}$ where $\mathfrak{q}_i$ is a maximal ideal in $\overline{\mathcal{A}}$. Indeed, suppose $\mathfrak{m}_{i}'$ is maximal in $\mathcal{A}'$ such that $\mathfrak{m}_{i}'\mathcal{A}_{\mathrm{red}}' = \mathfrak{m}_i$. Then $\mathfrak{m}_{i}' \cap \mathcal{A} = \mathfrak{p}$ and so $\mathfrak{m}_{i}' \cap \overline{\mathcal{A}} = \mathfrak{q}_i$ where 
$\mathfrak{q}_i$ is maximal in $\overline{\mathcal{A}}$. But as shown above $\mathfrak{q}_i\mathcal{A}'$ is maximal and is contained in $\mathfrak{m}_{i}'$. Thus $\mathfrak{q}_i \mathcal{A}' = \mathfrak{m}_{i}'$ and so $\mathfrak{q}_i \mathcal{A}_{\mathrm{red}}' = \mathfrak{m}_i$.

For each $i=2, \ldots, s$ select $c_i \in \mathfrak{q}_{i}^{n_i}$ with $c_i \not \in \mathfrak{q}_1$ and $n_i$ sufficiently large so that $\Pi_{i=2}^{s} c_i$ is in $V_2 \cap \ldots \cap V_s$. Let $n_1$ be the smallest positive integer such $\mathfrak{m}_{1}^{n_1}=\mathfrak{q}_1^{n_1}\mathcal{A}'_{\mathrm{red}} \subset V_1$.
If $n_{1}>1$, select $c_1 \in \mathfrak{q}_{1}^{n_{1}-1}$ and $c_1 \not \in V_1$. If $n_1=1$, set $c_1:=1$. 
Set $c:= \Pi_{i=1}^{s}c_i$ and $b_2:=cb_1$. Then $b_2 \not \in \mathcal{A}'_{\mathrm{red}}$ as $c \not \in \mathcal{I}$. Thus the annihilator of $b_2$ in $\mathcal{B}'_{\mathrm{red}}/\mathcal{A}'_{\mathrm{red}}$ is $\mathfrak{m}_1$. Repeating the argument from above we get $\mathrm{ht}(\mathfrak{m}_1) \leq 1$. By faithful flatness $ \mathrm{ht}(\mathfrak{m}_1)= \mathrm{ht}(\mathfrak{q}_1)$. So $\mathrm{ht}(\mathfrak{q}_1)\leq 1$. This proves the existence of $\mathfrak{q}:= \mathfrak{q}_1$ with the desired properties. 

Consider $\rm{(ii)}$. By Prp.\ \ref{finite} there exist finitely many primes in $\mathrm{Ass}_{\mathcal{A}}(\mathcal{B}/\mathcal{A})$. But $\mathrm{Ass}_{\mathcal{A}}(\overline{\mathcal{A}}/\mathcal{A}) \subset \mathrm{Ass}_{\mathcal{A}}(\mathcal{B}/\mathcal{A})$. So $\mathrm{Ass}_{\mathcal{A}}(\overline{\mathcal{A}}/\mathcal{A})$ is finite, too. Because $\overline{\mathcal{A}}_{\mathfrak{p}_{\mathrm{min}}} = \mathcal{A}_{\mathfrak{p}_{\mathrm{min}}}$ for each minimal prime $\mathfrak{p}_{\mathrm{min}}$ of $\mathcal{A}$, then by prime avoidance we can select $f$ from the intersection of the minimal primes in $\mathrm{Ass}_{\mathcal{A}}(\overline{\mathcal{A}}/\mathcal{A})$ so that $f$ avoids each $\mathfrak{p}_{\mathrm{min}}$. Let $g \in \overline{\mathcal{A}}$. Then $\mathcal{A}[g]$ is module-finite over $\mathcal{A}$. Hence there exists $u$ such that $f^u$ annihilates 
$\mathcal{A}[g]/\mathcal{A}$. In particular, $f^{u}g \in \mathcal{A}$. Thus $\mathcal{A}_f=\overline{\mathcal{A}}_f$. 

Suppose $\mathfrak{p} \in \mathrm{Ass}_{\mathcal{A}}(\mathcal{B}/\overline{\mathcal{A}})$. If $f \not \in  \mathfrak{p}$, then $\mathfrak{p} \in \mathrm{Ass}_{\mathcal{A}}(\mathcal{B}_f/\overline{\mathcal{A}}_f)$. But $\mathcal{A}_f=\overline{\mathcal{A}}_f$. So $\mathfrak{p} \in \mathrm{Ass}_{\mathcal{A}}(\mathcal{B}_f/\mathcal{A}_f)$, which is a finite set by Prp.\ \ref{finite}.

Next suppose $f \in \mathfrak{p}$. Preserve the setup from the proof of part $\rm{(i)}$. Assume $\mathcal{A}$ is local with maximal ideal $\mathfrak{p}$. First we show that there exists a module-finite extension of $\mathcal{A}$ and a prime ideal $\mathfrak{P}$ in it of height at most one that contracts to $\mathfrak{p}$. By part $\rm{(i)}$ there exists $\mathfrak{q}_1 \in \overline{\mathcal{A}}$ with 
$\mathrm{ht}(\mathfrak{q}_1) \leq 1$ and $\mathfrak{q}_1 \cap \mathcal{A} = \mathfrak{p}$. By prime avoidance we can select $x \in \mathfrak{q}_1$ whose image in $\mathcal{A}'_{\mathrm{red}}$ avoids $\mathfrak{m}_i$ for $i=2, \ldots, s$. Consider $\mathcal{A}[x]$. Because $x$ is integral over $\mathcal{A}$, then $\mathcal{A}[x]$ is module-finite over $\mathcal{A}$. Set $\mathfrak{P}:= \mathfrak{q}_1 \cap \mathcal{A}[x]$. All maximal ideals in $\overline{\mathcal{A}}$ that contract to $\mathfrak{P}$ contain $x$. Thus they all have $\mathfrak{m}_1$ as their image in $\mathcal{A}'$. But $\mathrm{ht}(\mathfrak{m}_1) \leq 1$. So by faithful flatness all maximal ideals in $\overline{\mathcal{A}}$ that contract to $\mathfrak{P}$ are of height at most one. Consider the integral extension $\mathcal{A}[x]_{\mathfrak{P}} \hookrightarrow \overline{\mathcal{A}}_{\mathfrak{P}}$. Then 
$$\mathrm{ht}(\mathfrak{P})=\dim \mathcal{A}[x]_{\mathfrak{P}}=\dim \overline{\mathcal{A}}_{\mathfrak{P}} = \sup_{j \in J} \{\mathrm{ht}(\mathfrak{q}_j) \}$$
where $J$ is an index set for the maximal ideals in $\overline{\mathcal{A}}$ contracting to $\mathfrak{P}$. But $\mathrm{ht}(\mathfrak{q}_j) \leq 1$, so $\mathrm{ht}(\mathfrak{P}) \leq 1$. 

Because $f$ avoids the minimal primes of $\mathcal{A}$, then if there exists $d \in \mathcal{B}$ such that $fd=0$, then $d$ must be nilpotent. After passing to the reductions of $\mathcal{A}$ and $\mathcal{A}[x]$ the contraction of $\mathfrak{P}$ to $\mathcal{A}$ is still $\mathfrak{p}$ and the height of $\mathfrak{P}$ remains intact. To keep the notation simple we will identify $\mathcal{A}$ and $\mathcal{A}[x]$ with their corresponding reductions. Because $\mathcal{A}[x]$ is module-finite over $\mathcal{A}$ there exists a positive integer $n$ such that $f^{n}\mathcal{A}[x] \subset \mathcal{A}$. Observe that $f$ is regular in $\mathcal{A}[x]$ because $\mathcal{A}[x]$ is reduced. 

Next we proceed as in the proof of Lem.\ 4.9.5 in \cite{Huneke}. Set $\mathfrak{J}:= (\mathcal{A}: _{\mathcal{A}} \mathcal{A}[x])$. 
Suppose $\mathfrak{J} \not \subset \mathfrak{P}$. Then $\mathfrak{J}$ contains a unit, so $\mathcal{A}[x]=\mathcal{A}$ and $\mathfrak{P} =\mathfrak{p}$. But $\mathrm{ht}(\mathfrak{P}) \leq 1$. So $\mathrm{ht}(\mathfrak{p}) \leq 1$. But $f \in \mathfrak{p}$. Moreover, $f$ avoids the minimal primes of $\mathcal{B}$. So $f$ avoids the minimal primes of $\mathcal{A}$. Thus $\mathfrak{p}$ is an associated prime of $\mathcal{A}/f\mathcal{A}$. 

Assume $\mathfrak{J} \subset \mathfrak{P}$. Suppose that $\mathfrak{p}$ is not an associated prime of $\mathcal{A}/f^{n}\mathcal{A}$. Then there exists $z \in \mathfrak{p}$ that is regular in $\mathcal{A}/f^{n}\mathcal{A}$. Let $\mathfrak{P}=\mathfrak{P}_1, \ldots, \mathfrak{P}_k$ be the minimal primes  of $f^{n}\mathcal{A}[x]$. Select $e \in \mathfrak{P}_2 \cap \ldots \cap \mathfrak{P}_k$ and $e \not \in \mathfrak{P}$ such that $z^{l}e \in f^{n}\mathcal{A}[x]$ for some $l$. Then $$(f^{n}e\mathcal{A}[x])z^l = f^n(z^{l}e\mathcal{A}[x]) \subseteq f^{n}(f^{n}\mathcal{A}[x]) \subseteq f^{n}\mathcal{A}.$$
But $z$ is regular in $\mathcal{A}/f^{n}\mathcal{A}$. So $f^{n}e\mathcal{A}[x] \subseteq f^{n}\mathcal{A}$. But $f$ is regular in $\mathcal{A}[x]$, so $e\mathcal{A}[x] \subseteq \mathcal{A}$. Thus $e \in \mathfrak{J}$. But $e \not \in \mathfrak{P}$ which is a contradiction. Hence $\mathfrak{p} \in \mathrm{Ass}_{\mathcal{A}}(\mathcal{A}/f^{n}\mathcal{A})$. 
But $f$ is regular in $\mathcal{A}$. So $\mathrm{Ass}_{\mathcal{A}}(\mathcal{A}/f^{n}\mathcal{A})= \mathrm{Ass}_{\mathcal{A}}(\mathcal{A}/f\mathcal{A})$. Therefore,  $\mathfrak{p} \in \mathrm{Ass}_{\mathcal{A}}(\mathcal{A}/f\mathcal{A})$. 

Finally, we obtained that if $\mathfrak{p} \in \mathrm{Ass}_{\mathcal{A}}(\mathcal{B}/\overline{\mathcal{A}})$, then 
$$\mathrm{Ass}_{\mathcal{A}}(\mathcal{B}/\overline{\mathcal{A}}) \subseteq \mathrm{Ass}_{\mathcal{A}}(\mathcal{B}_{f}/\mathcal{A}_f) \cup \mathrm{Ass}_{\mathcal{A}}(\mathcal{A}_{\mathrm{red}}/f\mathcal{A}_{\mathrm{red}}).$$ 
Because $\mathrm{Ass}_{\mathcal{A}}(\mathcal{B}_{f}/\mathcal{A}_f)$ and $\mathrm{Ass}_{\mathcal{A}}(\mathcal{A}_{\mathrm{red}}/f\mathcal{A}_{\mathrm{red}})$ are finite, then so is $\mathrm{Ass}_{\mathcal{A}}(\mathcal{B}/\overline{\mathcal{A}})$. 

Next, we want to show that  $\mathrm{Ass}_{\overline{\mathcal{A}}}(\mathcal{B}/\overline{\mathcal{A}})$ is finite. Assuming again that $\mathcal{A}$ is local at $\mathfrak{p}$, we claim that the number of maximal ideals in  $\overline{\mathcal{A}}$ is finite. Indeed, there are only finitely many maximal ideals in $\mathcal{A}'$. Also, $\overline{\mathcal{A}} \rightarrow \mathcal{A}'$ is injective, and the ideal generated by the image of each maximal ideal in $\overline{\mathcal{A}}$ under this map is maximal in $\mathcal{A}'$ as shown in \rm{(i)}. Thus, for each prime $\mathfrak{p}$ in $\mathcal{A}$ there are only finitely many prime ideals in $\overline{\mathcal{A}}$ contracting to $\mathfrak{p}$. If $\mathfrak{q} \in \mathrm{Ass}_{\overline{\mathcal{A}}}(\mathcal{B}/\overline{\mathcal{A}})$, then $\mathfrak{q} \cap \mathcal{A}$ is in $\mathrm{Ass}_{\mathcal{A}}(\mathcal{B}/\overline{\mathcal{A}})$. But the latter set is finite. So $\mathrm{Ass}_{\overline{\mathcal{A}}}(\mathcal{B}/\overline{\mathcal{A}})$ is finite, too. 

Consider \rm{(iii)}. Let $\mathfrak{p} \in \mathrm{Ass}_{\mathcal{A}}(\mathcal{B}/\overline{\mathcal{A}})$. As before we can assume $\mathcal{A}$ is local with maximal ideal $\mathfrak{p}$. In the proof of part \rm{(ii)} we showed that there exists a module-finite extension $\mathcal{A}[x]$ and a maximal ideal $\mathfrak{P}$ of height at most one contracting to $\mathfrak{p}$. If $\mathfrak{P}$ is minimal, then $\mathfrak{p}$ is minimal by our hypothesis and we are done. Suppose $\mathrm{ht}(\mathfrak{P})=1$. Let $\mathfrak{q}_{\mathrm{min}}$ be a minimal prime of $\mathcal{A}[x]$ contained in $\mathfrak{P}$. Set $\mathfrak{p}_{\mathrm{min}}= \mathfrak{q}_{\mathrm{min}} \cap \mathcal{A}$. Because $\mathcal{A}$ is universally catenary, by the dimension formula (Thm.\ B.5.1 in \cite{Huneke}) applied to the extension $\mathcal{A}/\mathfrak{p}_{\mathrm{min}} \hookrightarrow \mathcal{A}[x]/\mathfrak{q}_{\mathrm{min}}$ we get
$$\mathrm{ht}(\mathfrak{P}(\mathcal{A}[x]/\mathfrak{q}_{\mathrm{min}}))=\mathrm{ht}(\mathfrak{p}(\mathcal{A}/\mathfrak{p}_{\mathrm{min}}))=1.$$
But $\mathcal{A}$ is equidimensional. So $\mathrm{ht}(\mathfrak{p})=1$. 
\qed
\vspace{.3cm}

The aim of the author was to give for the most part a self-contained proof of Thm.\ \ref{main}. The proof of part $\rm{(i)}$ mimics the proof of the well-known fact that in an integrally closed Noetherian ring the set of associated primes of an arbitrary principal ideal generated by a non-zero divisor consists exactly of the set of minimal primes of that ideal, and all such associated primes are locally principal (see Prp.\ 4.1.1 in \cite{Huneke}). The proof of part \rm{(ii)} uses some of the ideas of the proof of the Mori--Nagata theorem for Krull domains. The new ingredient here is Prp.\ \ref{finite} which proves the finiteness part of the statement. The proof of part \rm{(iii)} follows immediate from the proof of part  \rm{(ii)} and the dimension formula. 

One can give a concise proof of Thm.\ \ref{main} \rm{(i)} and the inclusion statement in part \rm{(ii)} using the theory of Krull domains and results of Ratliff as suggested by the referee. This is done in the appendix.

Let $R$ be a Noetherian ring and $I$ an ideal in $R$ and $t$ a variable. The {\it Rees algebra} of $I$, denoted $\mathcal{R}(I)$, is the subring of $R[t]$ defined as $\oplus_{n=0}^{\infty} I^{n}t^n$. Denote the $k$th graded pieces of $R[t]$ and $\mathcal{R}(I)$ by $R[t]_{k}$ and $\mathcal{R}(I)_k$, respectively. Denote by $\overline{I^n}$ the integral closure of $I^n$ in $R$. The integral closure 
$\overline{\mathcal{R}(I)}$ of $\mathcal{R}(I)$ in $R[t]$ is $\oplus_{n=0}^{\infty} \overline{I^{n}}t^n$. 

Note that if $P \in \mathrm{Ass}_{R}(R/\overline{I^n})$, then there exists a minimal prime $P_{\mathrm{min}}$ of $R$ with $P_{\mathrm{min}} \subseteq P$ such that $P/P_{\mathrm{min}}$ is associated to the integral closure of $I^{n}(R/P_{\mathrm{min}})$ in $R/P_{\mathrm{min}}$ (see Lem.\ 5.4.4 in \cite{Huneke}). So to keep the exposition as simple as possible we assume that $R$ is a domain.

\begin{corollary}\label{Rees finite} Let $R$ be a Noetherian domain, $I$ an ideal in $R$ and $a \in I$. Suppose $P \in \mathrm{Ass}_{R}(R/\overline{I^n})$
for some $n$. Then $P$ is a contraction of an associated prime of $a\mathcal{R}(I)$.
\end{corollary}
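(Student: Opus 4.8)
The plan is to apply Theorem~\ref{main}(ii) to the pair $\mathcal{A}:=\mathcal{R}(I)\subseteq\mathcal{B}:=R[t]$ with the specific auxiliary element $f:=a$. We may assume $I\neq(0)$ and $a\neq 0$: for $I=(0)$ everything in sight is the domain $R$ and the statement is immediate, and some nonvanishing hypothesis on $a$ is in any case needed. Under these assumptions $\mathcal{R}(I)$ is Noetherian, $R[t]=\mathcal{R}(I)[t]$ is a finitely generated $\mathcal{R}(I)$-algebra, $\mathcal{R}(I)\subseteq R[t]$ is a domain whose unique minimal prime $(0)$ contracts to the minimal prime $(0)$ of $\mathcal{R}(I)$, and $\mathcal{R}(I)_{(0)}=\mathrm{Frac}(\mathcal{R}(I))=\mathrm{Frac}(R[t])=\overline{\mathcal{R}(I)}_{(0)}$, so the hypotheses of Theorem~\ref{main}(ii) hold. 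From the graded decomposition $\overline{\mathcal{R}(I)}=\oplus_{m\ge 0}\overline{I^m}t^m$ one gets $\mathcal{B}/\overline{\mathcal{A}}=\oplus_{m\ge 0}(R/\overline{I^m})t^m$, hence $\mathrm{Ass}_R(\mathcal{B}/\overline{\mathcal{A}})=\bigcup_m\mathrm{Ass}_R(R/\overline{I^m})$, so the given $P$ lies in $\mathrm{Ass}_R(\mathcal{B}/\overline{\mathcal{A}})$. Finally, since $a\in I$, inverting $a$ forces $t=(at)/a$ into $\mathcal{R}(I)_a$, whence the key identity $\mathcal{R}(I)_a=R[1/a][t]=R[t]_a$; in particular $\mathcal{R}(I)_a=\overline{\mathcal{R}(I)}_a$.

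Next I would check that $f:=a$ is a legitimate choice of auxiliary element in the proof of Theorem~\ref{main}(ii): that proof uses of $f$ only that $f$ avoids the minimal primes of $\mathcal{A}$ and that $\mathcal{A}_f=\overline{\mathcal{A}}_f$, and for $\mathcal{A}=\mathcal{R}(I)$ and $f=a$ both hold by the previous paragraph ($a\neq 0$ in the domain $\mathcal{R}(I)$, and $\mathcal{R}(I)_a=\overline{\mathcal{R}(I)}_a$). Since $\mathcal{R}(I)$ is reduced, $\mathcal{A}_{\mathrm{red}}=\mathcal{A}$, and running the proof of Theorem~\ref{main}(ii) with $f=a$ gives
$$\mathrm{Ass}_{\mathcal{R}(I)}(R[t]/\overline{\mathcal{R}(I)})\ \subseteq\ \mathrm{Ass}_{\mathcal{R}(I)}(R[t]_a/\mathcal{R}(I)_a)\ \cup\ \mathrm{Ass}_{\mathcal{R}(I)}(\mathcal{R}(I)/a\mathcal{R}(I)).$$
Because $R[t]_a=\mathcal{R}(I)_a$, the first term on the right is empty, so in fact $\mathrm{Ass}_{\mathcal{R}(I)}(R[t]/\overline{\mathcal{R}(I)})\subseteq\mathrm{Ass}_{\mathcal{R}(I)}(\mathcal{R}(I)/a\mathcal{R}(I))$.

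It remains to descend $P$. Since $P\in\mathrm{Ass}_R(\mathcal{B}/\overline{\mathcal{A}})$ and $\mathcal{R}(I)$ is Noetherian, the argument in the proof of the second part of Proposition~\ref{finite} applies (it does not require $\mathcal{B}/\overline{\mathcal{A}}$ to be a finitely generated module): choosing $b\in R[t]$ with $\mathrm{Ann}_R(\tilde{b})=P$ in $\mathcal{B}/\overline{\mathcal{A}}$, the minimal primes $\mathfrak{Q}_1,\ldots,\mathfrak{Q}_s$ over the $\mathcal{R}(I)$-ideal $\mathrm{Ann}_{\mathcal{R}(I)}(\tilde{b})$ lie in $\mathrm{Ass}_{\mathcal{R}(I)}(\mathcal{R}(I)\tilde{b})\subseteq\mathrm{Ass}_{\mathcal{R}(I)}(\mathcal{B}/\overline{\mathcal{A}})$, and their contractions to $R$ intersect in $\sqrt{\mathrm{Ann}_{\mathcal{R}(I)}(\tilde{b})\cap R}=\sqrt{\mathrm{Ann}_R(\tilde{b})}=\sqrt{P}=P$; since $P$ is prime, some $\mathfrak{Q}_i$ contracts to $P$. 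Taking $\mathfrak{Q}:=\mathfrak{Q}_i$ and combining with the previous paragraph, $\mathfrak{Q}\in\mathrm{Ass}_{\mathcal{R}(I)}(\mathcal{R}(I)/a\mathcal{R}(I))$ with $\mathfrak{Q}\cap R=P$, which is exactly the assertion. (Since $\mathcal{R}(I)$ is Noetherian, $\mathrm{Ass}_{\mathcal{R}(I)}(\mathcal{R}(I)/a\mathcal{R}(I))$ is finite, which also recovers the finiteness of $\bigcup_n\mathrm{Ass}_R(R/\overline{I^n})$.)

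The one place requiring care is the second step: Theorem~\ref{main}(ii) only asserts the existence of \emph{some} $f$, so one must verify — by inspecting its proof — that the prescribed $a$ will serve. The input that makes this work is the identity $\mathcal{R}(I)_a=R[t]_a$ coming from $a\in I$, which at once yields $\mathcal{A}_a=\overline{\mathcal{A}}_a$ (so $f=a$ is admissible) and the emptiness of $\mathrm{Ass}_{\mathcal{R}(I)}(R[t]_a/\mathcal{R}(I)_a)$ (so the localized term drops out). Everything else is bookkeeping with the graded pieces and the contraction argument of Proposition~\ref{finite}.
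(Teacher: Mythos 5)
Your argument is correct and is essentially the paper's own proof: the paper likewise applies Thm.~\ref{main}(ii) to $\mathcal{A}=\mathcal{R}(I)\subset\mathcal{B}=R[t]$, justifies the choice $f:=a$ by the observation $a^kR[t]_k\subset\mathcal{R}(I)_k$ (i.e.\ $R[t]_a=\mathcal{R}(I)_a$), lifts $P$ to some $\mathfrak{p}\in\mathrm{Ass}_{\mathcal{R}(I)}(R[t]/\overline{\mathcal{R}(I)})$ via the second part of Prp.~\ref{finite}, and concludes $\mathfrak{p}\in\mathrm{Ass}_{\mathcal{R}(I)}(\mathcal{R}(I)/a\mathcal{R}(I))$ since $\mathcal{R}(I)$ is a domain. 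Your extra care in checking that $f=a$ is an admissible choice in the proof of (ii) and in noting the implicit assumption $a\neq 0$ only makes explicit what the paper leaves tacit.
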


\begin{proof}
Apply Thm.\ \ref{main} \rm{(ii)} with $\mathcal{A}:= \mathcal{R}(I)$ and $\mathcal{B}:= R[t]$. 
Because $a^kR[t]_{k} \subset \mathcal{R}(I)_k$ for each $k \geq 1$ we can select $f:=a$. By  \cite[\href{http://stacks.math.columbia.edu/tag/05DZ}{Tag 05DZ}]{Stacks} or the proof of the second part of Prp.\ \ref{finite}, 
$P$ is a contraction of $\mathfrak{p} \in \mathrm{Ass}_{\mathcal{R}(I)}(R[t]/\overline{\mathcal{R}(I)})$. As $R[t]_{a}=\mathcal{R}(I)_a$ and $\mathcal{R}(I)$ is a domain, Thm.\ \ref{main} \rm{(ii)} implies that $\mathfrak{p}$ is associated to $a\mathcal{R}(I)$. 
\end{proof}

%Assume $R$ is Noetherian with minimal primes $P_1, \ldots, P_k$. 
%Assume that $P_{1}, \ldots, P_{l}$ for $1 \leq l \leq k$ are the minimal primes that do not contain $I$. For each $j=1, \ldots, l$ denote by $\mathcal{R}_j (I)$ the Rees algebra of $I(R/P_j)$ in $R/P_j$. 

%\begin{corollary} Let $R$ be a Noetherian ring and $I$ an ideal in $R$.
%Let $P_1, \ldots, P_k$ be the minimal primes of $R$. For those $P_j$
%with $I \not \subset P_j$ set $R_j:= R/P_j$ and let $a_j$ be a generator of $I$ with $a_j \not \in P_j$
%Suppose $P \in \mathrm{Ass}_{R}(R/\overline{I^n})$ for some $n$. Then 
%either $P$ is a minimal prime of $R$ or there exists $a_j$ such that $P$ is a contraction of a prime in   $\mathrm{Ass}_{\mathcal{R}}(\mathcal{R}_j(I)/a_j\mathcal{R}_j(I))$. 
%\end{corollary}

Next we show that Thm.\ \ref{main} \rm{(iii)} recovers a criterion for integral dependence due to Simis, Ulrich and Vasconcelos (Thm.\ 4.1 in \cite{SUV}). 
\begin{corollary}(Simis--Ulrich--Vasconcelos)\label{suv}
Let $\mathcal{A} \subset \mathcal{B}$ be an extension of rings with $\mathcal{A}$ Noetherian, locally equidimensional 
and universally catenary. Assume that $\mathcal{A}_{\mathfrak{p}} \subset \mathcal{B}_{\mathfrak{p}}$
is integral for each prime $\mathfrak{p}$ in $\mathcal{A}$ with $\mathrm{ht}(\mathfrak{p}) \leq 1$. Further, assume
that each minimal prime of $\mathcal{B}$ contracts to a minimal prime of $\mathcal{A}$. Then $\mathcal{B}$
is integral over $\mathcal{A}$. 
\end{corollary}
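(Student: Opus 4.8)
The plan is to reduce the statement to a single adjoined element and then to apply Thm.\ \ref{main} \rm{(iii)}. Since $\mathcal{B}$ is integral over $\mathcal{A}$ exactly when every $b \in \mathcal{B}$ is, and since $b$ is integral over $\mathcal{A}$ exactly when it is integral over $\mathcal{A}$ inside the finitely generated subalgebra $\mathcal{A}[b] \subseteq \mathcal{B}$, I would fix $b \in \mathcal{B}$, write $\mathcal{C}$ for the integral closure of $\mathcal{A}$ in $\mathcal{A}[b]$, and aim to prove $\mathcal{C} = \mathcal{A}[b]$; this is equivalent to $b \in \mathcal{C}$.

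First I would verify that the extension $\mathcal{A} \subset \mathcal{A}[b]$ satisfies the hypotheses of Thm.\ \ref{main} \rm{(iii)}. The ring $\mathcal{A}$ is unchanged, so it is still Noetherian, locally equidimensional and universally catenary, and $\mathcal{A}[b]$ is a finitely generated $\mathcal{A}$-algebra. The point that needs an argument is that every minimal prime $\mathfrak{q}$ of $\mathcal{A}[b]$ contracts to a minimal prime of $\mathcal{A}$. Here I would use the inclusion $\mathcal{A}[b] \hookrightarrow \mathcal{B}$: localizing at $\mathfrak{q}$ gives a nonzero injection $\mathcal{A}[b]_{\mathfrak{q}} \hookrightarrow \mathcal{B}_{\mathfrak{q}}$, and $\mathcal{A}[b]_{\mathfrak{q}}$ has $\mathfrak{q}\mathcal{A}[b]_{\mathfrak{q}}$ as its unique prime (because $\mathfrak{q}$ is minimal), so $\operatorname{Spec} \mathcal{B}_{\mathfrak{q}} \neq \emptyset$ and some prime of $\mathcal{B}$ contracts into $\mathfrak{q}$, hence onto $\mathfrak{q}$ by minimality. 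Shrinking such a prime to a minimal prime $\mathfrak{Q}_0$ of $\mathcal{B}$ still gives $\mathfrak{Q}_0 \cap \mathcal{A}[b] = \mathfrak{q}$, so $\mathfrak{q} \cap \mathcal{A} = \mathfrak{Q}_0 \cap \mathcal{A}$, which is a minimal prime of $\mathcal{A}$ by the standing hypothesis on $\mathcal{B}$.

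With this in place I would argue by contradiction: if $\mathcal{C} \neq \mathcal{A}[b]$, then $\mathcal{A}[b]/\mathcal{C}$ is a nonzero module over the Noetherian ring $\mathcal{A}$, so $\mathrm{Ass}_{\mathcal{A}}(\mathcal{A}[b]/\mathcal{C}) \neq \emptyset$; pick $\mathfrak{p}$ in it. Applying Thm.\ \ref{main} \rm{(iii)} to $\mathcal{A} \subset \mathcal{A}[b]$ (so that the integral closure $\overline{\mathcal{A}}$ there is $\mathcal{C}$) gives $\mathrm{ht}(\mathfrak{p}) \leq 1$. But then the hypothesis that $\mathcal{A}_{\mathfrak{p}} \subset \mathcal{B}_{\mathfrak{p}}$ is integral passes to the subextension $\mathcal{A}_{\mathfrak{p}} \subset \mathcal{A}[b]_{\mathfrak{p}}$, and since integral closure commutes with localization (Prop.\ 2.16 in \cite{Huneke}) we get $\mathcal{C}_{\mathfrak{p}} = \mathcal{A}[b]_{\mathfrak{p}}$, i.e.\ $(\mathcal{A}[b]/\mathcal{C})_{\mathfrak{p}} = 0$, contradicting $\mathfrak{p} \in \mathrm{Ass}_{\mathcal{A}}(\mathcal{A}[b]/\mathcal{C})$. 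Hence $\mathcal{C} = \mathcal{A}[b]$ for every $b$, and $\mathcal{B}$ is integral over $\mathcal{A}$.

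The hard part will be the descent of the condition ``minimal primes contract to minimal primes'' from $\mathcal{B}$ (where it is assumed) to the intermediate algebra $\mathcal{A}[b]$ (where Thm.\ \ref{main} \rm{(iii)} is invoked); once that is settled, the rest is a formal application of Thm.\ \ref{main} \rm{(iii)} together with the compatibility of integral closure with localization.
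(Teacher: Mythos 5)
Your proposal is correct and follows essentially the same route as the paper: reduce to the subextension $\mathcal{A} \subset \mathcal{A}[b]$, check that minimal primes of $\mathcal{A}[b]$ still contract to minimal primes of $\mathcal{A}$, apply Thm.\ \ref{main} \rm{(iii)} to get an (associated) prime of height at most one in the support of $\mathcal{A}[b]/\mathcal{C}$, and contradict the integrality hypothesis in height $\leq 1$. The only differences are cosmetic: you work with $\mathrm{Ass}$ rather than minimal primes of $\mathrm{Supp}$, you spell out the contraction-of-minimal-primes step that the paper merely asserts, and you skip the paper's (unnecessary for this argument) preliminary reduction to reduced $\mathcal{B}$.
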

\begin{proof}
We can assume that $\mathcal{B}$ is reduced as each nilpotent element of $\mathcal{B}$ is integral over $\mathcal{A}$. Suppose $b \in \mathcal{B}$ and $b$ is not integral over $\mathcal{A}$. Denote by $\mathcal{A}[b]$ the algebra generated by $\mathcal{A}$ and $b$. Denote by $\overline{\mathcal{A}}$ the integral closure of $\mathcal{A}$ in $\mathcal{A}[b]$. Each minimal prime of $\mathcal{A}[b]$ is contracted from a minimal prime of $\mathcal{B}$ and thus each minimal prime of $\mathcal{A}[b]$ contracts to a minimal prime of $\mathcal{A}$. 
By Thm.\ \ref{main} \rm{(iii)} we know that the minimal primes in $\mathrm{Supp}_{\mathcal{A}}(\mathcal{A}[b]/\overline{\mathcal{A}})$ are
of height at most one. But by assumption none of the primes in $\mathcal{A}$ of height at most one is in $\mathrm{Supp}_{\mathcal{A}}(\mathcal{A}[b]/\overline{\mathcal{A}})$. 
We reached a contradiction. Thus $\mathcal{B}$ is integral over $\mathcal{A}$. 
\end{proof}

\begin{center}
{\it Proof of Theorem \ref{converse}}
\end{center}
\vspace{.1cm}
The proof is an improvement of the proof of Thm.\ 8.3 in \cite{Rangachev}. 

Suppose $\mathfrak{m} \not \in \mathrm{Ass}_{\mathcal{A}}(\mathcal{B}/\overline{\mathcal{A}})$. By prime avoidance and Prp.\ \ref{finite} we can select $h_1 \in \mathfrak{m}$ so that $h_1$ avoids all primes in $\mathrm{Ass}_{R}(\mathcal{B}/\mathcal{A}) \setminus \{\mathfrak{m}\}$, and the minimal primes of $\mathcal{A}$ and $\mathcal{B}$. Consider the map $$\psi_{h_1} \colon \mathcal{A}/h_{1}\mathcal{A} \rightarrow \mathcal{B}/h_{1}\mathcal{B}.$$ Set $\mathcal{A}(h_1):= \Ima \psi_{h_{1}}$. Observe that $\Ker \psi_{h_1} = (h_{1}\mathcal{B} \cap \mathcal{A})/h_{1}\mathcal{A}$. Suppose $b \in \mathcal{B}$ with $h_{1}b \in \mathcal{A}$. Then $b \in \overline{\mathcal{A}}$. Indeed, if $\mathcal{I}:= \mathrm{Ann}(\tilde{b})$ where $\tilde{b}$ is the image of $b$ in the $\mathcal{B}/\mathcal{A}$ viewed as an $R$-module, then the radical of $\mathcal{I}$ is the intersection of associated primes of $\mathcal{B}/\mathcal{A}$. But $h_1$ avoids all of them but $\mathfrak{m}$. Hence $\mathcal{I}$ must be $\mathfrak{m}$-primary. But $\mathfrak{m} \not \in \mathrm{Ass}_{\mathcal{A}}(\mathcal{B}/\overline{\mathcal{A}})$. So $b \in \overline{\mathcal{A}}$. Then
$$b^{s}+a_{1}b^{s-1}+\cdots+a_{s}=0$$
for some positive integer $s$ and $a_i \in \mathcal{A}$. Multiplying  both sides of the last equation by $h_{1}^s$ we obtain
$$(h_{1}b)^{s}+ha_{1}(h_{1}b)^{s-1}+\cdots+h_{1}^{s}a_{s}=0.$$
Thus $(h_{1}b)^s=0$ in $\mathcal{A}/h_{1}\mathcal{A}$. Then $\Ker \psi_{h_1}$ consists of nilpotents and thus $\mathcal{A}(h_1)$ and 
$\mathcal{A}/h_{1}\mathcal{A}$ have the same reduced structures. Because each minimal prime of $\mathfrak{m}\mathcal{B}$  is of height at least $2$, we can find $h_2 \in \mathfrak{m}$ such that $h_2$ avoids the minimal primes of $\mathcal{B}/h_{1}\mathcal{B}$. Because $\Ker \psi_{h_1}$ consists of nilpotents, $h_2$ avoids the minimal primes of $\mathcal{A}/h_{1}\mathcal{A}$. Thus each minimal prime of $\mathfrak{m}\mathcal{A}$ is of height at least $2$ which contradicts our assumption that there exists a minimal prime $\mathfrak{p}$ of $\mathfrak{m}\mathcal{A}$ of height at most one. Thus $\mathfrak{m} \in \mathrm{Ass}_{R}(\mathcal{B}/\overline{\mathcal{A}})$. Let $b$
such that $\mathrm{Ann}_{R}(\tilde{b}) = \mathfrak{m}$ where $\tilde{b}$ is the image of $b$ in $\mathcal{B}/\overline{\mathcal{A}}$. 
Then we can select $c \in \mathcal{A}$ so that $\mathrm{Ann}_{\mathcal{A}}(c \tilde{b})= \mathfrak{p}$ as we did in the proof of Thm.\ \ref{main} \rm{(i)}. 
\qed

Let $(R,\mathfrak{m})$ be a local Noetherian domain of dimension $d$. Let $\mathcal{M}$ be a nonzero finitely generated $R$-module contained in a free $R$-module $\mathcal{F}:=R^k$. Denote by $K$ the field of fractions of $R$. Set $e:= \mathrm{rk}(M\otimes_{R} K)$. The {\it Rees algebra} of $\mathcal{M}$, denoted by $\mathcal{R}(\mathcal{M})$, is defined as the symmetric algebra of $\mathcal{M}$ modulo its $R$-torsion submodule (for a more general definition see \cite{Eisenbud}). The symmetric algebra of $\mathcal{F}$ is the polynomial ring $R[y_1,\ldots, y_k]$.
So, $\mathcal{R}(\mathcal{M})$ can viewed as the subalgebra of $R[y_1, \ldots, y_k]$ generated by generators for $\mathcal{M}$ viewed as degree one polynomials in the $y_i$s. Denote by $\mathcal{F}^n$ and $\mathcal{M}^n$ the $n$th graded pieces of $\mathrm{Sym}(\mathcal{F})$ and $\mathcal{R}(\mathcal{M})$, respectively. Define the {\it integral closure of} $\mathcal{M}^n$ in $\mathcal{F}^n$, denoted by $\overline{\mathcal{M}^n}$, to be the module consisting of all  elements in $\mathcal{F}^n$ satisfying an integral dependence relation over $\mathcal{R}(\mathcal{M})$. 

The Krull dimension of $\mathcal{R}(\mathcal{M})$ is $d+e$. (see Lem.\ 16.2.2 (3) in \cite{Huneke}). The {\it special fiber} of $\mathcal{R}(\mathcal{M})$ is the ring $\mathcal{R}(\mathcal{M})/\mathfrak{m}\mathcal{R}(\mathcal{M})$. Its Krull dimension is called the {\it analytic spread} of $\mathcal{M}$ and is denoted by $l(\mathcal{M})$. If $\dim R >0$, then because $\mathcal{R}(\mathcal{M})$ is a domain $l(\mathcal{M}) \leq d+e-1$. The following result is a direct corollary of Thm.\ \ref{main} \rm{(iii)} and Thm.\ \ref{converse}. It gives a necessary and sufficient condition for $\mathfrak{m}$ to be in $\mathrm{Ass}_{R}(\mathcal{F}^n/\overline{\mathcal{M}^n})$. Note that when $R$ is a field $\mathcal{M}$ and $\mathcal{F}$ are $R$-vector spaces. Thus $\mathcal{R}(\mathcal{M})$ is a polynomial ring over $R$; hence $\mathcal{M}^n=\overline{\mathcal{M}^n}$ for each $n$. Therefore, $\mathfrak{m}  \in \mathrm{Ass}_{R}(\mathcal{F}^n/\overline{\mathcal{M}^n})$ if and only if $e \neq k$. 

\begin{corollary}\label{classical} Suppose $(R,\mathfrak{m})$ is a local Noetherian domain of positive dimension. If $R$ is universally catenary and $\mathfrak{m} \in \mathrm{Ass}_{R}(\mathcal{F}^n/\overline{\mathcal{M}^n})$ for some $n$, then $l(M)=d+e-1$. Conversely, if $l(M)=d+e-1$, then $\mathfrak{m} \in \mathrm{Ass}_{R}(\mathcal{F}^n/\overline{\mathcal{M}^n})$ for some $n$, unless $\dim R=1$ and $\mathcal{M}$ is free and a direct summand of $\mathcal{F}$. 
\end{corollary}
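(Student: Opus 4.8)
The plan is to apply Thm.\ \ref{main} \rm{(iii)} and Thm.\ \ref{converse} with $\mathcal{A} := \mathcal{R}(\mathcal{M})$ and $\mathcal{B} := \mathrm{Sym}(\mathcal{F}) = R[y_1, \ldots, y_k]$. Since $R$ is a domain, both $\mathcal{A}$ and $\mathcal{B}$ are Noetherian domains, so $\mathcal{A}$ is locally equidimensional and the unique minimal prime of $\mathcal{B}$ contracts to the unique minimal prime of $\mathcal{A}$. The integral closure $\overline{\mathcal{A}}$ of $\mathcal{A}$ in $\mathcal{B}$ is graded, equal to $\bigoplus_n \overline{\mathcal{M}^n}$, so $\mathcal{B}/\overline{\mathcal{A}} = \bigoplus_n \mathcal{F}^n/\overline{\mathcal{M}^n}$ and therefore $\mathfrak{m} \in \mathrm{Ass}_R(\mathcal{F}^n/\overline{\mathcal{M}^n})$ for some $n$ if and only if $\mathfrak{m} \in \mathrm{Ass}_R(\mathcal{B}/\overline{\mathcal{A}})$. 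The bridge to the analytic spread is the identity $l(\mathcal{M}) = \dim(\mathcal{A}/\mathfrak{m}\mathcal{A}) = d+e - \min_{\mathfrak{P}} \mathrm{ht}(\mathfrak{P})$, where $\mathfrak{P}$ runs over the minimal primes of $\mathfrak{m}\mathcal{A}$: the bound $\mathrm{ht}(\mathfrak{P}) + \dim(\mathcal{A}/\mathfrak{P}) \le \dim\mathcal{A} = d+e$ is automatic, $\mathrm{ht}(\mathfrak{P}) \ge 1$ since $\mathcal{A}$ is a domain and $\mathfrak{m}\mathcal{A} \ne 0$, while the reverse inequality $\mathrm{ht}(\mathfrak{P}) + \dim(\mathcal{A}/\mathfrak{P}) = d+e$ follows from the dimension formula (Thm.\ B.5.1 in \cite{Huneke}) for $R \subset \mathcal{A}$, which is available precisely because $R$ is universally catenary. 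Consequently, when $R$ is universally catenary, $l(\mathcal{M}) = d+e-1$ if and only if $\mathfrak{m}\mathcal{A}$ has a minimal prime of height one; and in all cases $l(\mathcal{M}) = d+e-1$ forces $\mathfrak{m}\mathcal{A}$ to have a minimal prime of height at most one.

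For the first assertion, suppose $\mathfrak{m} \in \mathrm{Ass}_R(\mathcal{B}/\overline{\mathcal{A}})$, say $\mathfrak{m} = \mathrm{Ann}_R(\xi)$ with $0 \ne \xi \in \mathcal{B}/\overline{\mathcal{A}}$, and put $\mathcal{J} := \mathrm{Ann}_{\mathcal{A}}(\xi)$, so that $\mathfrak{m}\mathcal{A} \subseteq \mathcal{J} \subsetneq \mathcal{A}$ and $\mathcal{J} \cap R = \mathfrak{m}$. Because $\mathcal{A}\xi \cong \mathcal{A}/\mathcal{J}$ is a finitely generated module over the Noetherian ring $\mathcal{A}$, the ideal $\mathcal{J}$ has only finitely many minimal primes; choose one, $\mathfrak{Q}$. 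Then $\mathfrak{Q} \in \mathrm{Ass}_{\mathcal{A}}(\mathcal{A}/\mathcal{J}) \subseteq \mathrm{Ass}_{\mathcal{A}}(\mathcal{B}/\overline{\mathcal{A}})$, so Thm.\ \ref{main} \rm{(iii)} (whose hypotheses hold: $\mathcal{A}$ is a domain, hence locally equidimensional, and universally catenary as a finitely generated algebra over the universally catenary ring $R$) yields $\mathrm{ht}(\mathfrak{Q}) \le 1$. Taking a minimal prime $\mathfrak{P}$ of $\mathfrak{m}\mathcal{A}$ with $\mathfrak{P} \subseteq \mathfrak{Q}$ gives $1 \le \mathrm{ht}(\mathfrak{P}) \le \mathrm{ht}(\mathfrak{Q}) \le 1$, so $\mathfrak{m}\mathcal{A}$ has a minimal prime of height one and hence $l(\mathcal{M}) = d+e-1$ by the preceding paragraph.

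For the converse, assume $l(\mathcal{M}) = d+e-1$ and let $\mathfrak{P}$ be a minimal prime of $\mathfrak{m}\mathcal{A}$ with $\mathrm{ht}(\mathfrak{P}) \le 1$ as above. If $d = \dim R \ge 2$, then $\mathrm{ht}(\mathfrak{m}\mathcal{B}) = \mathrm{ht}(\mathfrak{m} R[y_1, \ldots, y_k]) = d \ge 2$, so Thm.\ \ref{converse}, applied to $R \subset \mathcal{A}$, gives $\mathfrak{P} \in \mathrm{Ass}_{\mathcal{A}}(\mathcal{B}/\overline{\mathcal{A}})$; writing $\mathfrak{P} = \mathrm{Ann}_{\mathcal{A}}(\eta)$ with $0 \ne \eta \in \mathcal{B}/\overline{\mathcal{A}}$ we get $\mathrm{Ann}_R(\eta) = \mathfrak{P} \cap R = \mathfrak{m}$, so $\mathfrak{m} \in \mathrm{Ass}_R(\mathcal{B}/\overline{\mathcal{A}})$, as desired. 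When $d = 1$ the hypothesis reads $l(\mathcal{M}) = e$, and the assertion, together with the exceptional case --- which is exactly the case $\overline{\mathcal{A}} = \mathcal{A}$ with $\mathcal{B}/\mathcal{A}$ a free $R$-module, so that $\mathrm{Ass}_R(\mathcal{B}/\overline{\mathcal{A}}) = \{(0)\}$ --- is \cite[Prp.\ 8.5]{Rangachev}. The main point is the asymmetry between the two directions: the ``only if'' must convert the associated-prime hypothesis on the $R$-module $\mathcal{B}/\overline{\mathcal{A}}$ into one over $\mathcal{A}$ (via cyclicity of $\mathcal{A}\xi$ and passage to a minimal prime of $\mathcal{J}$) and then invoke the dimension formula, hence universal catenariness, whereas the ``if'' uses only the trivial height inequality and Thm.\ \ref{converse}; and the case $d=1$ genuinely lies outside the scope of Thm.\ \ref{converse}, which is why it is handled separately.
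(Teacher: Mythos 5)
Your proposal is correct and follows essentially the same route as the paper: the forward direction via Thm.\ \ref{main}\,(iii) plus the dimension formula (universal catenarity of $R$), and the converse via the height bound forced by $l(\mathcal{M})=d+e-1$ together with Thm.\ \ref{converse} when $\dim R\geq 2$, falling back on \cite[Prp.\ 8.5]{Rangachev} when $\dim R=1$. The only cosmetic differences are that you re-derive by hand the passage from $\mathfrak{m}\in\mathrm{Ass}_{R}(\mathcal{B}/\overline{\mathcal{A}})$ to a prime of $\mathrm{Ass}_{\mathcal{A}}(\mathcal{B}/\overline{\mathcal{A}})$ contracting to $\mathfrak{m}$ (the paper cites \cite[\href{http://stacks.math.columbia.edu/tag/05DZ}{Tag 05DZ}]{Stacks} or Prp.\ \ref{finite}) and that you obtain the height-one minimal prime from the elementary inequality $\mathrm{ht}(\mathfrak{P})+\dim(\mathcal{A}/\mathfrak{P})\leq d+e$ where the paper invokes the dimension inequality.
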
 
\begin{proof} Set $\mathcal{A}:= \mathcal{R}(\mathcal{M})$ and $\mathcal{B}:=\mathrm{Sym}(\mathcal{F})$. Suppose $R$ is universally catenary and $\mathfrak{m} \in \mathrm{Ass}_{R}(\mathcal{F}^n/\overline{\mathcal{M}^n})$ for some $n$. Because $R$ is universally catenary so is $\mathcal{A}$. Because $\mathfrak{m} \in \mathrm{Ass}_{R}(\mathcal{B}/\overline{\mathcal{A}})$, then by \cite[\href{http://stacks.math.columbia.edu/tag/05DZ}{Tag 05DZ}]{Stacks} or the proof of Prp.\ \ref{finite} there exists $\mathfrak{p} \in \mathcal{A}$ such that $\mathfrak{p} \cap R=\mathfrak{m}$ and $\mathfrak{p} \in \mathrm{Ass}_{\mathcal{A}}(\mathcal{B}/\overline{\mathcal{A}})$. Then by Thm.\ \ref{main} \rm{(iii)} $\mathrm{ht}(\mathfrak{p})\leq 1$. 
Applying the dimension formula (Thm.\ B.5.1 in \cite{Huneke}) for $R$ and $\mathcal{A}$ we obtain $\dim \mathcal{A}/\mathfrak{p}=d+e-1$. But $l(M) \geq \dim \mathcal{A}/\mathfrak{p}$ because $\mathfrak{p}$ contains $\mathfrak{m}$. Hence $l(M) = d+e-1$. 

Conversely, assume $l(\mathcal{M})=d+e-1$. Suppose $\dim R=1$. If $\mathfrak{m} \not \in \mathrm{Ass}_{R}(\mathcal{F}^n/\overline{\mathcal{M}^n})$ for each $n$, then by Prp.\ 8.5 in \cite{Rangachev} we conclude that $\mathcal{M}$ is free of rank $e$ and a direct summand of $\mathcal{F}$. Assume $\dim R > 1$. Then $\mathrm{ht}(\mathfrak{m}\mathcal{B}) \geq 2$ because $\mathcal{B}$ is a polynomial ring over $R$. Because $l(\mathcal{M})=d+e-1$, by the the dimension inequality (see Thm.\ B.2.5 in \cite{Huneke}) there exists a minimal prime $\mathfrak{p}$ of $\mathfrak{m}\mathcal{R}(\mathcal{M})$ with $\mathrm{ht}(\mathfrak{p}) \leq 1$. Thus by Thm.\ \ref{converse} $\mathfrak{p} \in \mathrm{Ass}_{\mathcal{A}}(\mathcal{B}/\overline{\mathcal{A}})$ which yields by \cite[\href{http://stacks.math.columbia.edu/tag/05DZ}{Tag 05DZ}]{Stacks} $\mathfrak{p} \cap R = \mathfrak{m}$ is in $\mathrm{Ass}_{R}(\mathcal{B}/\overline{\mathcal{A}})$. By degree considerations we get $\mathfrak{m} \in \mathrm{Ass}_{R}(\mathcal{F}^n/\overline{\mathcal{M}^n})$ for some $n$ (in fact for all sufficiently large $n$ by Thm.\ 7.5 in \cite{Rangachev}).
\end{proof}
When $\mathcal{M}$ is an ideal in $R$, Cor.\ \ref{classical} is a result of McAdam \cite[Prp.\ 4.1]{McAdam} and 
Burch \cite[Cor., pg.\ 373]{Burch} (cf.\ Thm.\ 5.4.6 and Thm.\ 5.4.7 in \cite{Huneke}). 
In the more general module setup Cor.\ \ref{classical}
is due to Katz and Rice \cite[Thm.\ 3.5.1]{Katz2} for the case $e=k$. Cor.\ \ref{classical} was proved by the author 
(see \cite[Cor.\ 8.9 ]{Rangachev}) assuming that $l(\mathcal{F})<d+e-1$ whenever $\dim R \geq 2$, which is weaker than the assumption that $\mathcal{F}$ is free. 

\vspace{.1cm}
When $\mathcal{A}$ is the Rees algebra of a module, then $\mathcal{A}$ comes equipped with an embedding in a polynomial ring $\mathcal{B}$. For a general finitely generated $R$-algebra $\mathcal{A}$ an embedding into a polynomial ring over $R$ may not exist. However, in the next proposition we show that under mild assumptions, $\mathcal{A}$ always has an embedding in a finitely generated graded $R$-algebra $\mathcal{B}$ satisfying the hypothesis of Thm.\ \ref{converse} provided that $\dim R \geq 2$. The existence of such embedding plays a crucial role in the excesss-degree formula in Thm.\ 2.4 in \cite{Rangachev2}, the computation of the local volume of a relatively ample invertible sheaf in \cite[Sct.3]{Rangachev2} and in the proof of the Local Volume Formula in \cite[Sct.4]{Rangachev2}.

\begin{proposition}\label{nice embedding}
 Suppose $R$ is a reduced equidimensional universally catenary Noetherian ring of positive dimension or an infinite field. Assume $\mathcal{A} = \oplus_{i=0}^{\infty} \mathcal{A}_{i}$ is a reduced equidimensional standard graded algebra over $R$. Assume that the minimal primes of $\mathcal{A}$ contract to minimal primes of $R$. Then there exists a standard graded $R$-algebra $\mathcal{B} = \oplus_{i=0}^{\infty} \mathcal{B}_{i}$ such that 
\begin{enumerate}
    \item[\rm{(1)}]$\mathcal{B}$ is a birational extension of $\mathcal{A}$, and the inclusion $\mathcal{A} 
 \subset \mathcal{B}$ is  homogeneous;
   
    \item[\rm{(2)}] For each prime $\mathfrak{p}$ in $R$ the minimal primes of $\mathfrak{p}\mathcal{B}$ are of height at least $\mathrm{ht}(\mathfrak{p}/\mathfrak{p}_{\mathrm{min}})$ where $\mathfrak{p}_{\mathrm{min}}$ is a minimal prime of $R$ contained in $\mathfrak{p}$.
\end{enumerate}
\end{proposition}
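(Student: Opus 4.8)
The plan is to realise $\mathcal{B}$ as a homogeneous $R$-subalgebra of the total ring of fractions $Q(\mathcal{A})$ of $\mathcal{A}$: a Noether normalization of the generic fibre of $\mathrm{Spec}\,\mathcal{A}\to\mathrm{Spec}\,R$ tells us which degree‑one fractions to adjoin, property (1) is then essentially built in, and property (2) drops out of the dimension formula once we know the fibres of $\mathrm{Proj}\,\mathcal{B}\to\mathrm{Spec}\,R$ do not jump in dimension. First I would reduce to the case where both $R$ and $\mathcal{A}$ are domains: since $R$ is reduced with minimal primes $\mathfrak{p}_{1},\dots,\mathfrak{p}_{t}$ and the minimal primes of $\mathcal{A}$ contract to these, one has $\mathcal{A}\hookrightarrow\prod_{j}\mathcal{A}/\mathfrak{P}_{j}$ and $Q(\mathcal{A})=\prod_{j}\mathrm{Frac}(\mathcal{A}/\mathfrak{P}_{j})$; applying the construction to each domain $\mathcal{A}/\mathfrak{P}_{j}$ over the domain $R/\mathfrak{p}_{i(j)}$ (which again satisfies the hypotheses) and reassembling the coordinates inside $Q(\mathcal{A})$ produces the general $\mathcal{B}$, while assertion (2) is read off on each $\mathcal{A}/\mathfrak{P}_{j}$ over $R/\mathfrak{p}_{i}$ — this is where the quotient $\mathfrak{p}/\mathfrak{p}_{\min}$ enters. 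So assume $R$ is an infinite field or an infinite Noetherian domain of positive dimension, so that $K:=\mathrm{Frac}(R)$ is infinite, and $\mathcal{A}$ is a standard graded domain over $R$ with minimal prime contracting to $(0)$; set $n:=\dim\mathcal{A}-\dim R$, the relative dimension, so that $\mathcal{A}\otimes_{R}K$ is a standard graded domain over $K$ of dimension $n$ and, by equidimensionality of $R$ and $\mathcal{A}$, $\dim\mathcal{A}=\dim R+n$.

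Next, apply homogeneous Noether normalization to $\mathcal{A}\otimes_{R}K$ over the infinite field $K$: there are degree‑one elements $z_{1},\dots,z_{n}$ of $\mathcal{A}_{1}\otimes_{R}K\subseteq Q(\mathcal{A})_{1}$, algebraically independent over $R$, with $\mathcal{A}\otimes_{R}K$ module‑finite over $K[z_{1},\dots,z_{n}]$, and with the $z_i$ chosen sufficiently general the induced rational map $\pi:\mathrm{Proj}\,\mathcal{A}\dashrightarrow\mathbb{P}^{n-1}_{R}$ is finite over the generic fibre. The heart of the proof is to enlarge $\mathcal{A}$, inside $Q(\mathcal{A})$, to a finitely generated, homogeneous, standard graded $R$-subalgebra $\mathcal{B}\supseteq\mathcal{A}$, still birational over $\mathcal{A}$, for which $\mathrm{Proj}\,\mathcal{B}\to\mathbb{P}^{n-1}_{R}$ is a \emph{finite} morphism: geometrically one passes to a birational model of $\mathrm{Proj}\,\mathcal{A}$ that is finite over $\mathbb{P}^{n-1}_{R}$ (resolve the indeterminacy of $\pi$ and take a Stein‑type factorisation of the resulting morphism — finite, because the relevant sheaf of algebras is coherent) and then re‑polarises it so that $\mathcal{A}$ reappears in degree one. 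Concretely one adjoins the $z_{i}$ together with finitely many degree‑one fractions; this is routine over a dense open $\mathrm{Spec}\,R\smallsetminus V(f)$, where the enlargement is already module‑finite over $R_{f}[z_{1},\dots,z_{n}]$, and the remaining work is to arrange it over $V(f)$ too. The outcome is an algebra $\mathcal{B}$ with the following fibre bound $(\star)$: for every prime $\mathfrak{p}$ of $R$, every irreducible component of $\mathrm{Spec}(\mathcal{B}\otimes_{R}\kappa(\mathfrak{p}))$ has dimension at most $n$ — indeed $\mathrm{Proj}\,\mathcal{B}\to\mathbb{P}^{n-1}_{R}$ finite forces the fibres of $\mathrm{Proj}\,\mathcal{B}\to\mathrm{Spec}\,R$ to have dimension $\le n-1$, so the affine cones have components of dimension $\le n$.

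Granting $(\star)$, the rest is formal. Property (1): $\mathcal{B}$ is Noetherian and reduced (a subring of $Q(\mathcal{A})$), the inclusion $\mathcal{A}\subseteq\mathcal{B}$ is homogeneous and $\mathcal{B}$ is standard graded by construction, $\mathcal{B}$ is birational over $\mathcal{A}$, and being birational over the equidimensional $\mathcal{A}$ whose minimal primes contract to those of $R$, so is $\mathcal{B}$; in particular $\dim\mathcal{B}=\dim R+n$. For property (2), fix a prime $\mathfrak{p}$ of $R$, a minimal prime $\mathfrak{p}_{\min}\subseteq\mathfrak{p}$, and a minimal prime $\mathfrak{Q}$ of $\mathfrak{p}\mathcal{B}$, and put $\mathfrak{p}':=\mathfrak{Q}\cap R\supseteq\mathfrak{p}$. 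Then $\mathfrak{Q}$ is also minimal over $\mathfrak{p}'\mathcal{B}$ (no prime lies strictly between $\mathfrak{p}\mathcal{B}\subseteq\mathfrak{p}'\mathcal{B}$ and $\mathfrak{Q}$), so $\mathfrak{Q}$ cuts out a component of $\mathrm{Spec}(\mathcal{B}\otimes_{R}\kappa(\mathfrak{p}'))$; by $(\star)$ and the dimension formula for the domain $\mathcal{B}/\mathfrak{Q}$ over the catenary domain $R/\mathfrak{p}'$ one gets $\dim(\mathcal{B}/\mathfrak{Q})\le\dim(R/\mathfrak{p}')+n\le\dim(R/\mathfrak{p})+n$. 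Since $R$ is universally catenary so is $\mathcal{B}$, and since $\mathcal{B}$ is equidimensional, $\mathrm{ht}(\mathfrak{Q})=\dim\mathcal{B}-\dim(\mathcal{B}/\mathfrak{Q})\ge(\dim R+n)-(\dim(R/\mathfrak{p})+n)=\mathrm{ht}(\mathfrak{p})=\mathrm{ht}(\mathfrak{p}/\mathfrak{p}_{\min})$, the last equality because $R$ is equidimensional and catenary. This is exactly (2).

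The hard part will be the construction of $\mathcal{B}$ with the fibre bound $(\star)$, i.e.\ spreading the generic Noether normalization over all of $\mathrm{Spec}\,R$. The dense‑open case is routine, but the repair over the jumping locus is delicate: one must adjoin enough degree‑one fractions to kill the fibre jump there without destroying birationality and without re‑inflating fibres that were already tamed, and one must keep the process finite (the naive move of "zooming into $R/\mathfrak{q}$" is obstructed because the fibre ring $\mathcal{A}\otimes_{R}R/\mathfrak{q}$ need be neither reduced nor equidimensional and may have minimal primes not contracting to minimal primes of $R/\mathfrak{q}$, so one cannot apply the proposition to it directly). Everything else — the reduction to domains, property (1), and the deduction of (2) from $(\star)$ via the dimension formula — is routine.
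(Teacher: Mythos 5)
Your outline correctly identifies the crux --- producing a birational standard graded extension whose fibres over $\mathrm{Spec}\,R$ satisfy a uniform dimension bound --- but it does not prove it. You explicitly defer the construction of $\mathcal{B}$ with property $(\star)$: the dense-open case is declared routine and the ``repair over the jumping locus'' is acknowledged as delicate and left open, with only a gesture toward resolving the indeterminacy of $\mathrm{Proj}\,\mathcal{A}\dashrightarrow\mathbb{P}^{n-1}_{R}$, a Stein-type factorisation, and a ``re-polarisation'' so that $\mathcal{A}$ reappears in degree one. None of these steps is carried out, and it is far from clear that they can be: blowing up and Stein factorisation produce a scheme finite over $\mathbb{P}^{n-1}_{R}$, but not obviously the $\mathrm{Proj}$ of a \emph{standard graded} $R$-algebra containing $\mathcal{A}$ homogeneously in degree one and birational to it. Since the entire content of the proposition is this construction (your reduction to domains, property (1), and the deduction of (2) from $(\star)$ are indeed routine), the proposal as it stands has a genuine gap rather than a complete proof.

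The paper's proof shows that no stratification of $\mathrm{Spec}\,R$ and no repair over a jumping locus are needed; a single global rescaling does the job. One performs homogeneous Noether normalization of $\mathcal{A}\otimes_R K$ over the total ring of fractions $K=\prod_i R_{\mathfrak{p}_i}$, getting degree-one elements $b_1',\dots,b_e'$, and writes down integral equations over $R_{\mathfrak{p}_i}[\pi_i(b_1'),\dots,\pi_i(b_e')]$ for the images of the finitely many degree-one generators $a_1,\dots,a_s$ of $\mathcal{A}$. Letting $d_i$ be the product of all denominators occurring in these equations and setting $b_k:=(\pi_1(b_k')/d_1,\dots,\pi_l(b_k')/d_l)$, every $a_j$ becomes integral over $R[b_1,\dots,b_e]$ (clearing a denominator $e\mid d_i$ in a degree-$m$ coefficient costs $d_i^{m}$, which is absorbed by replacing $(b')^{m}$ with $(b'/d_i)^{m}$), so $\mathcal{B}:=\mathcal{A}[b_1,\dots,b_e]$ is integral over $R[b_1,\dots,b_e]$ over \emph{all} of $\mathrm{Spec}\,R$. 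This is exactly the fibrewise bound you wanted: for a minimal prime $Q$ of $\mathfrak{p}\mathcal{B}$ with $\mathfrak{p}':=Q\cap R$, the extension $(R/\mathfrak{p}')[\tilde b_1,\dots,\tilde b_e]\hookrightarrow \mathcal{B}/Q$ is integral, so $\mathrm{tr.deg}_{\kappa(\mathfrak{p}')}\kappa(Q)\le e$, and the dimension formula applied directly to $R\subset\mathcal{B}$ (with a minimal prime $Q_{\min}\subset Q$ and $\mathfrak{p}_{\min}=Q_{\min}\cap R$) gives $\mathrm{ht}(Q)\ge\mathrm{ht}(Q/Q_{\min})\ge\mathrm{ht}(\mathfrak{p}'/\mathfrak{p}_{\min})\ge\mathrm{ht}(\mathfrak{p}/\mathfrak{p}_{\min})$. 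Note this also bypasses the equidimensionality and catenarity bookkeeping for $\mathcal{B}$ that your deduction of (2) from $(\star)$ relies on (e.g.\ $\mathrm{ht}(\mathfrak{Q})=\dim\mathcal{B}-\dim(\mathcal{B}/\mathfrak{Q})$), which would itself require justification. If you want to salvage your write-up, replace the unproved construction of $\mathcal{B}$ by this denominator-scaling argument; the rest of your skeleton then collapses into the paper's two-step use of integrality plus the dimension formula.
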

\begin{proof} Denote by $K$ the total ring of fractions of $R$.
Because $\mathcal{A}$ is reduced and its minimal primes contract to minimal primes of $R$, then $\mathcal{A}$ is $R$-torsion free. Hence $\mathcal{A}$ injects into $\mathcal{A} \otimes K$. Set $e:= \dim \mathcal{A} \otimes K$. Let $\mathfrak{p}_1, \ldots, \mathfrak{p}_l$ be the minimal primes of $R$ and $\mathfrak{q}_1, \ldots, \mathfrak{q}_t$ be the minimal primes of $\mathcal{A}$. Fix a minimal prime $\mathfrak{q}_u$. Assume $\mathfrak{q}_u$ contracts to  $\mathfrak{p}_i$. Set $\kappa(\mathfrak{q}_u):=\mathrm{Frac}(\mathcal{A}/\mathfrak{q}_u)$ and $\kappa(\mathfrak{p}_i):=\mathrm{Frac}(R/\mathfrak{p}_i)$. Then by \cite[\href{http://stacks.math.columbia.edu/tag/02JX}{Tag 02JX}]{Stacks} or Lem.\ 3.1 (ii) in \cite{KT-Al} we get
$$\dim \mathcal{A}/\mathfrak{q}_u=\dim R/\mathfrak{p}_i + \mathrm{tr.\ deg}_{\kappa(\mathfrak{p}_i)}\kappa(\mathfrak{q}_u).$$
Because $\mathcal{A}$ and $R$ are equidimensional we obtain that $\mathcal{A}_{\mathfrak{p}_i}$ is equidimensional with $\dim A_{\mathfrak{p}_i}= \dim \mathcal{A}/\mathfrak{q}_{u}-\dim R/\mathfrak{p}_i=e$. Because $R$ is reduced, $K=R_{\mathfrak{p}_1} \times \cdots \times R_{\mathfrak{p}_l}$. Thus $\mathcal{A} \otimes K = \mathcal{A}_{\mathfrak{p}_1} \times \cdots \times \mathcal{A}_{\mathfrak{p}_l}$. 

For each $i=1, \ldots, l$ denote by $\pi_{i}$ the projection homomorphism $\mathcal{A} \otimes K \rightarrow \mathcal{A}_{\mathfrak{p}_i}$. Because $R$ is equdimensional of positive dimension or $R$ is an infinite field, then each field $R_{\mathfrak{p}_i}$ is infinite. Thus by Noether normalization we can select $e$ elements $b_{1}', \ldots, b_{e}'$ in $\mathcal{A}_1 \otimes K$ such that $\mathcal{A}_{\mathfrak{p}_i}$ is integral over $R_{\mathfrak{p}_i}[\pi_{i}(b_{1}'), \ldots, \pi_{i}(b_{e}')]$ for each $i$. 

Let $a_1, \ldots, a_s$ be degree one generators of  $\mathcal{A}$ over $R$. Then each $\pi_{i}(a_j)$ for $j=1, \ldots, s$ satisfies
an equation of integral dependence over $R_{\mathfrak{p}_i}[\pi_{i}(b_{1}'), \ldots, \pi_{i}(b_{e}')]$. For each $i=1, \ldots, l$ let $d_i \in R_{\mathfrak{p}_i}$ be the product over $j$ of all (nonzero) denominators appearing in the relation of integral dependence
of $\pi_{i}(a_j)$. For each $k=1, \ldots, e$ set
$$b_{k}:= \Big( \frac{\pi_{1}(b_{k}')}{d_1}, \ldots, \frac{\pi_{l}(b_{k}')}{d_l} \Big) \ \text{and} \ \mathcal{B}:= \mathcal{A}[b_1, \ldots, b_e].$$

As each $b_k$ is a fraction with enumerator in $\mathcal{A}_1$ and denominator in $R$, then $\mathcal{B}$ inherits naturally a grading from $\mathcal{A}$ with $\deg (b_k)=1$ for each $k=1, \ldots e$. Thus $\mathcal{A} \subset \mathcal{B}$ is a homogeneous inclusion. Because $b_{k}$ is in the total ring of fractions of $\mathcal{A}$, then $\mathcal{B}$ is a birational extension of $\mathcal{A}$. This proves $\rm{(1)}$.

Observe that $\mathcal{B}$ is integral over $R[b_1, \ldots, b_e]$ because each $a_j$ for $j=1, \ldots, s$ is integral over $R[b_1, \ldots, b_e]$. Note that $\mathfrak{p}\mathcal{B} \neq \mathcal{B}$. Indeed, 
$\mathcal{A}_0 = R$, so
$\mathcal{B}_0 = R$ and so $\mathfrak{p}\mathcal{B}_0 = \mathfrak{p} \neq R$. Let $Q$ be a minimal prime of $\mathfrak{p}\mathcal{B}$. Set $\mathfrak{p}':=Q \cap R$. Denote by $\widetilde{b_k}$ the images of $b_k$ in $\mathcal{B}/Q\mathcal{B}$. Then
\begin{equation}\label{int. dim.}
(R/\mathfrak{p}')[\widetilde{b_1}, \ldots, \widetilde{b_e}] \hookrightarrow \mathcal{B}/Q\mathcal{B}
\end{equation}
is an integral extension. Thus, $\mathrm{tr.\ deg}_{\kappa(\mathfrak{p}')}\kappa(Q) \leq e$, where $\kappa(\mathfrak{p}'):= \mathrm{Frac}(R/\mathfrak{p}')$ and $\kappa(Q):= \mathrm{Frac}(\mathcal{B}/Q\mathcal{B})$.  

Let $Q_{\mathrm{min}}$ be a minimal prime of $\mathcal{B}$ contained in $Q$. Set $\mathfrak{p}_{\mathrm{min}}:= Q_{\mathrm{min}} \cap R$. Because $R$ is a universally catenary and $\mathcal{B}$ is a finitely generated over $R$, the dimension formula (Thm.\ B.5.1 in \cite{Huneke}) gives  
$$\mathrm{ht}(Q/Q_{\mathrm{min}}) + \mathrm{tr.\ deg}_{\kappa(\mathfrak{p}')}\kappa(Q) = \mathrm{ht}(\mathfrak{p}'/\mathfrak{p}_{\mathrm{min}}) + \mathrm{tr.\ deg}_{\kappa(R/\mathfrak{p}_{\mathrm{min}})}\kappa(\mathcal{B}/Q_{\mathrm{min}})= \mathrm{ht}(\mathfrak{p}'/\mathfrak{p}_{\mathrm{min}}) + e.$$
But $\mathrm{tr.\ deg}_{\kappa(\mathfrak{p}')}\kappa(Q) \leq e$ and $\mathfrak{p} \subset \mathfrak{p}'$. Therefore,  $\mathrm{ht}(Q/Q_{\mathrm{min}}) \geq \mathrm{ht}(\mathfrak{p}'/\mathfrak{p}_{\mathrm{min}}) \geq \mathrm{ht}(\mathfrak{p}/\mathfrak{p}_{\mathrm{min}})$. 
\end{proof}
%As $a_i/D$ are algebraically independent 

%it follows that that 

%$$\dim B \otimes k(\mathfrak{p}) = \dim R[\frac{a_1}{D}, \ldots, \frac{a_e}{D}] \otimes k(\mathfrak{p}) = e$$ for each prime $\mathfrak{p}$ of $R$. 

\section{Appendix}
Here we present a proof of part $\rm{(i)}$ and the inclusion statement in part $\rm{(ii)}$ of Thm.\ \ref{main} kindly suggested to the authour by the referee which uses the theory of Krull domains (see Sct.\ 4.10 in \cite{Huneke}) and results of Ratliff \cite{Ratliff2}. 

First we introduce the notion of Krull domains (see Dfn.\ 4.10.1 in \cite{Huneke}). 
\begin{definition} An integral domain $R$ is a {\it Krull domain}
if 
\begin{enumerate}
    \item [(i)] for every prime ideal $P$ of $R$ with $\mathrm{ht}(P)=1,$ the localization $R_{P}$ is a Noetherian integrally closed domain, 
    \item [(ii)] $R = \cap_{\mathrm{ht}(P)=1} R_{P}$, and

    \item[(iii)] each nonzero $x \in R$ lies in finitely many prime ideals of $R$ of height one. 
\end{enumerate}
\end{definition}

The Mori--Nagata theorem \cite[Thm.\ 4.10.5]{Huneke} says that the integral closure of a reduced Noetherian ring $S$ in its total ring of fractions is a direct product of $l$ Krull domains where $l$ is the number of minimal primes of $S$. 

In \cite{Ratliff2} Ratliff extends the definition of Krull domains to the nondomain case as follows. 

\begin{definition}\label{rat} Define the class $\mathfrak{C}$ of commutative rings $R$ satisfying the following properties:
\begin{itemize}
    \item [(i)] $R$ has finitely many minimal prime, and
    \item [(ii)] for each minimal prime $P_{\mathrm{min}}$ the integral closure of $R/P_{\mathrm{min}}$ is a Krull domain. 
\end{itemize}
\end{definition}

First, we show that $\overline{\mathcal{A}}$ belongs to the class $\mathfrak{C}$. Because $\mathcal{B}$ is Noetherian and because each minimal prime of $\mathcal{\overline{A}}$ is a contraction of a minimal prime of $\mathcal{B}$, then $\mathcal{\overline{A}}$ has finitely many primes and thus satisfies \rm{(i)} in Dfn.\ \ref{rat}. Suppose $\mathfrak{q}_{\mathrm{min}}$ is a minimal prime of $\mathcal{\overline{A}}$. We have $\mathfrak{q}_{\mathrm{min}}=\mathfrak{q}_{0} \cap \mathcal{B}$ where $\mathfrak{q}_{0}$ is a minimal prime of $\mathcal{B}$. Consider the nested chain of domains $\mathcal{A}/(\mathfrak{q}_{0} \cap \mathcal{A}) \subset \overline{\mathcal{A}}/\mathfrak{q}_{\mathrm{min}} \subset \mathcal{B}/\mathfrak{q}_{0}$. Note that $\overline{\mathcal{A}}/\mathfrak{q}_{\mathrm{min}}$ is integral over $\mathcal{A}/(\mathfrak{q}_{0} \cap \mathcal{A})$ and therefore the two domains have the same integral closure in $\mathcal{B}/\mathfrak{q}_{0}$. Let $E$ be the algebraic closure of 
$\mathrm{Frac}(\mathcal{A}/(\mathfrak{q}_{0} \cap \mathcal{A}))$ in $\mathrm{Frac}(\mathcal{B}/\mathfrak{q}_{0})$. Note that $E$ is a finite field extension of $\mathrm{Frac}(\mathcal{A}/(\mathfrak{q}_{0} \cap \mathcal{A}))$. Denote by $L$ the integral closure of $\mathcal{A}/(\mathfrak{q}_{0} \cap \mathcal{A})$ in $E$. By the Mori--Nagata theorem $L$ is a Krull domain (see Ex.\ 4.15 in \cite{Huneke}). But $L$ is also the integral closure of $\overline{\mathcal{A}}/\mathfrak{q}_{\mathrm{min}}$ in its field of fractions. Thus $\overline{\mathcal{A}}$ belongs to the class $\mathfrak{C}$.

%Because the integral closure of $\mathcal{A}/(\mathfrak{q}_{0} \cap \mathcal{A})$ in $\mathcal{B}/\mathfrak{q}_{0}$ is the same as the integral closure of $\mathcal{A}/(\mathfrak{q}_{0} \cap \mathcal{A})$ in $E$, then the Mori--Nagata theorem implies that the integral closure of  $\mathcal{A}/(\mathfrak{q}_{0} \cap \mathcal{A})$ in $\mathcal{B}/\mathfrak{q}_{0}$ is a Krull domain. Hence, $\overline{\mathcal{A}}$ belongs to the class $\mathfrak{C}$. 

Consider Thm.\ \ref{main} \rm{(i)}. Note that by killing the nilradical of $\mathcal{B}$ we do not alter anything in the statement of  Thm.\ \ref{main} \rm{(i)}. Let's assume for now that $\mathcal{B}$ and hence $\mathcal{A}$ and $\overline{\mathcal{A}}$ are reduced. Suppose $\mathfrak{q} \in \mathrm{Ass}_{\overline{\mathcal{A}}}(\mathcal{B}/\overline{\mathcal{A}})$. Write $\mathfrak{q} = (\overline{\mathcal{A}} \colon b)$ for some $b \in \mathcal{B}$. If $\mathfrak{q}$ is contained in a minimal prime of $\mathcal{B}$, then by hypothesis and incomparability $\mathrm{ht}(\mathfrak{q}) \leq 1$ and we are done. Thus we can assume that $\mathfrak{q}$ avoids the minimal primes of $\mathcal{B}$. Further, we may assume $\overline{\mathcal{A}}$ is local at $\mathfrak{q}$. As in (\ref{saturation}) select a regular element $h \in \mathfrak{q}$. Then 
$$\mathfrak{q}=(h\overline{A} \colon hb).$$
But $\overline{\mathcal{A}}$ belongs to the class $\mathfrak{C}$, so by Lem.\ 2.6 (2) and Prp.\ 2.7 in \cite{Ratliff2} the reduced 
local ring $\overline{\mathcal{A}}$ is a DVR.

In general if $\mathcal{B}$ is not reduced, then its nilpotents belong to $\overline{\mathcal{A}}$. So locally at $\mathfrak{q}$ the ring $\overline{\mathcal{A}}$ may fail to be Noetherian and thus one can not hope for it to be a DVR, however, it is very close to one. 

\begin{definition}\cite[pg.213]{Ratliff2} 
A {\it quasi-v-ring} $R$ is a local possibly non-Noetherian ring of dimension one such that its maximal ideal is generated by a regular element in $R$. 
\end{definition}
Thus, by the aforementioned results of Ratliff, in the nonreduced case the conclusion is that locally at $\mathfrak{q}$ the ring $\overline{\mathcal{A}}$ is a quasi-v-ring. 

For the second statement in Thm.\ \ref{main}, one proceeds similarly. Suppose $\mathfrak{p} \in \mathrm{Ass}_{\mathcal{A}}(\mathcal{B}/\overline{\mathcal{A}})$. If $\mathfrak{p}$ is in a minimal prime of $\mathcal{B}$, then $\mathrm{ht}(\mathfrak{p}) \leq 1$ and we are done. So we can assume $\mathfrak{p}$ avoids the minimal primes of $\mathcal{B}$. First, suppose $\mathcal{B}$ is reduced. We may assume $\mathcal{A}$ is local at $\mathfrak{p}$. Because $\mathfrak{p}=(\overline{\mathcal{A}} \colon b)$ for some $b \in \mathcal{B}$, then  $\mathfrak{p}=(h\overline{A} \colon hb)$ for $h \in \mathfrak{p}$ a nonzero divisor in $\mathcal{B}$. Thus $\mathfrak{p}$ is the contraction of $(h\overline{\mathcal{A}} \colon_{\overline{\mathcal{A}}} hb) = (\overline{h\overline{\mathcal{A}}} \colon_{\overline{\mathcal{A}}} hb)$, so by \cite[Thm.\ 2.10]{Ratliff2}
$\mathfrak{p}$ is the contraction of a prime $\mathfrak{q} \subset \overline{\mathcal{A}}$ such that $\overline{\mathcal{A}}_{\mathfrak{q}}$ is a DVR. In general, if $\mathcal{B}$ is nonreduced, then as before $\mathfrak{p}$ is a contraction of $\mathfrak{q} \subset \overline{\mathcal{A}}$ such that  $\overline{\mathcal{A}}_{\mathfrak{q}}$ is a qusi-v-ring. 

Next, consider the statement in part $\rm{(ii)}$ of Thm.\ \ref{main} about inclusions of sets of associated primes. Again assume that $\mathcal{B}$ is reduced. Because $\mathrm{Ass}_{\mathcal{A}}(\overline{\mathcal{A}}/\mathcal{A})$ is finite by Prp.\ \ref{finite} and because $\mathcal{A}$ and $\overline{\mathcal{A}}$ are equal locally at the minimal primes of $\mathcal{A}$ we can select $f \in \mathcal{A}$ such that $f$ is in all primes in $\mathrm{Ass}_{\mathcal{A}}(\overline{\mathcal{A}}/\mathcal{A})$ and $f$ avoids the minimal primes of $\mathcal{A}$. After killing the nilradical as before, any $\mathfrak{p} \in \mathrm{Ass}_{\mathcal{A}}(\mathcal{B}/\overline{\mathcal{A}})$, which is not minimal, is the contraction of a prime $\mathfrak{q} \subset \overline{\mathcal{A}}$ such that $\overline{\mathcal{A}}_{\mathfrak{q}}$ is a DVR. Because $\mathcal{A}$ and $\overline{\mathcal{A}}$ have the same ring of fractions, then one can see that $\mathfrak{p}$ is a contraction of prime divisor of $f\tilde{\mathcal{A}}$ where $\tilde{\mathcal{A}}$ is the integral closure of $\mathcal{A}$ in its ring of fractions. There are only finitely many primes in $\tilde{\mathcal{A}}$ lying over $\mathfrak{p}$ by \cite[Prp.\ 4.8.2]{Huneke}. Then by \cite[Thm.\ 2.15]{Ratliff2} we conclude that $\mathfrak{p}$ is a prime divisor of $f\mathcal{A}$. If $\mathfrak{p}$ is a minimal prime, then $\mathfrak{p} \in \mathrm{Ass}_{A}(\mathcal{B}_f/\mathcal{A}_f)$ because $f \not \in \mathfrak{p}$ and $\mathfrak{p} \in \mathrm{Ass}_{\mathcal{A}}(\mathcal{B}/\mathcal{A})$ by hypothesis. 

%Consider $\rm{(iii)}$. Assume $\mathcal{B}$ is a domain. The nondomain case can be reduced to the domain case since for any $\mathfrak{p} \in \mathrm{Ass}_{\mathcal{A}}(\mathcal{B}/\overline{\mathcal{A}})$ has the property that 

\end{document}